\tikzset{cd/.style=matrix of math nodes,row sep=2em,column sep=2em, text height=1.5ex, text depth=0.5ex}
\tikzset{cdar/.style=->,auto}
\setlist[enumerate,1]{label=\textup{(\arabic*)}}
\setlist[enumerate,2]{label=\textup{(\alph*)}}
\newcommand*{\MRref}[2]{ \href{http://www.ams.org/mathscinet-getitem?mr=#1}{MR #1}}
\newcommand*{\arxiv}[1]{ \href{http://www.arxiv.org/abs/#1}{arXiv:#1}}
\renewcommand{\PrintDOI}[1]{\href{http://dx.doi.org/\detokenize{#1}}{doi: \detokenize{#1}}%
  \IfEmptyBibField{pages}{, (to appear in print)}{}}
\numberwithin{equation}{section}
\theoremstyle{plain}
\newtheorem{theorem}[equation]{Theorem}
\newtheorem{lemma}[equation]{Lemma}
\newtheorem{proposition}[equation]{Proposition}
\theoremstyle{definition}
\newtheorem{definition}[equation]{Definition}
\theoremstyle{remark}
\newtheorem{example}[equation]{Example}
\newcommand*{\dom}{\textup{Dom}}% domain of an unbounded linear map
\newcommand*{\nb}{\nobreakdash}
\newcommand*{\Star}{\(^*\)\nb-}
\newcommand*{\C}{\mathbb C}
\newcommand*{\R}{\mathbb R}
\newcommand*{\N}{\mathbb N}
\newcommand*{\diff}{\textup d}%used for dx in integrals etc.
 \newcommand*{\G}[1][G]{\mathbb #1}% quantum group G
\newcommand*{\DuG}[1][G]{\widehat{\mathbb{#1}}}
\newcommand*{\Comult}[1][]{\Delta_{#1}}%comultiplication
\newcommand*{\DuComult}[1][]{\hat{\Delta}_{#1}}%dualcomultiplication
\newcommand*{\Qgrp}[2]{\mathbb #1=(#2,\Comult[#2])}%quantum group
\newcommand*{\DuQgrp}[2]{\widehat{\mathbb #1}=(\hat{#2},\DuComult[#2])}%dual quantum group
\newcommand*{\Bialg}[1]{(#1,\Comult[#1])}%C*-bialgebra
\newcommand*{\Multunit}[1][]{\mathbb{W}^{#1}}%muliplicative unitary acting on a hilbert space
\newcommand*{\DuMultunit}[1][]{\widehat{\mathbb{W}}{}^{#1}}%muliplicative unitary on action on a hilbert space
\newcommand*{\CLS}{\mathrm{CLS}}%closed linear span
\newcommand*{\Bound}{\mathbb B}%adjointable operators on a Hilbert module
\newcommand*{\Comp}{\mathbb K}%compact operators on a Hilbert module
\newcommand*{\Contvin}{\textup C_0}%continuous functions vanishing at infinity
\newcommand*{\Contb}{\textup C_\textup b}%bounded continuous functions
\newcommand*{\Cont}{\textup C}%continuous functions
\newcommand*{\CompSupp}{\textup{C}_{\textup{c}}}%Compactly supported function
\newcommand*{\Mor}{\textup{Mor}}%nondegenerate *-homomorphisms of C*-algebras
\newcommand*{\Id}{\textup{id}}%identity map
\newcommand*{\Flip}{\Sigma}% flip operator on Hilbert space
\newcommand*{\flip}{\sigma}% flip map on the multiplier algebra
\newcommand*{\Cst}{\mathrm C^*}%C*-algebra
\newcommand*{\Cred}{\mathrm C^*_\mathrm r}%reduced group C*-algebra
\newcommand*{\Cstcat}{\mathfrak{C^*alg}}%category of C*-algebras
\newcommand*{\Hils}[1][H]{\mathcal #1}%Hilbert space
\newcommand*{\Mult}{\mathcal M}%multiplier algebra
\newcommand*{\U}{\mathcal U}%unitary group
\newcommand*{\defeq}{\mathrel{\vcentcolon=}}
\newcommand*{\abs}[1]{\lvert#1\rvert}
\newcommand*{\norm}[1]{\lVert#1\rVert}
\newcommand*{\conj}[1]{\overline{#1}}
\begin{document}
\title[Faithful actions of LCQG on classical spaces]{Faithful actions of locally compact quantum groups on classical spaces}

\author{Debashish Goswami}
\email{goswamid@isical.ac.in}
\address{Statistics and Mathematics Unit\\
 Indian Statistical Institute\\
 203, B. T. Road, Kolkata 700108\\
 India}

\author{Sutanu Roy}
\email{sutanu.roy@carleton.ca}
\address{School of Mathematics and Statistics\\
 Carleton University \\
 1125 Colonel By Drive, Ottawa, ON K1S 5B6\\
 Canada}

\begin{abstract}
It is well-known that no non\nb-Kac compact quantum group can faithfully  act on \(\Cont(X)\) for a classical, compact Hausdorff space~\(X\). However, in this article we show that this is no longer true if we go to non\nb-compact spaces and non\nb-compact quantum groups, by exhibiting a large class of examples of locally compact quantum groups coming from bicrossed product construction, including non\nb-Kac ones, which can faithfully and ergodically act on classical (non\nb-compact) spaces. However, none of these actions can be isometric in the sense of \cite{Goswami:Qnt_isometry}, leading to the conjecture that the result obtained by Goswami and Joardar in~\cite{Goswami-Joardar:Rigidity_CQG_act} about non\nb-existence of genuine quantum isometry of classical compact connected Riemannian manifolds may hold in the non\nb-compact case as well.
\end{abstract}

\subjclass[2010]{81R50, 46L89}
\keywords{Bicrossed product, faithful C*-action, locally compact quantum group}

\thanks{The first author was partially supported by Swarnajayanti grant given by 
  D.S.T., Government of India. The second author was supported by the visiting scientist 
  program at the Indian Statistical Institute, Kolkata. Parts of the manuscript had been 
  written and revised while the second author had been supported by Fields--Ontario 
  postdoctoral fellowship, NSERC and ERA at the University of Ottawa and Carleton 
  University.}

\maketitle

\section{Introduction}
\label{sec:introduction}
Symmetry plays a crucial role in many areas of mathematics and physics. Conventionally, group actions are used to model symmetries and with the advent of more general mathematical structures called quantum groups in~\cites{Drinfeld:Quantum_groups, Jimbo:Yang-baxter_eq, Woronowicz:CQG}. It should be natural to consider the actions (defined in a suitable sense depending on the  algebraic or analytic framework chosen) of quantum groups on classical and noncommutative spaces. In this context, a very interesting programme is to study quantum symmetries of classical spaces. One may hope that there are many more quantum symmetries of a given classical space than classical group symmetries which will help one understand the space better. Indeed, it has been a remarkable discovery of S. Wang that for \(n\geq 4\), a finite set of cardinality \(n\) has an infinite dimensional compact quantum group (`quantum permutation group') of symmetries. For the relevance of quantum group symmetries in a wider and more geometric context, we refer the reader to the discussion on `hidden symmetry in algebraic geometry' in 
\cite{Manin:Qnt_grp_NCG}*{Chapter 13} where Manin made a remark about possible genuine Hopf algebra symmetries of classical smooth algebraic varieties.

Recently,  several examples of  faithful continuous action by genuine (i.e. not of the form \(\Cont(G)\) for a compact group \(G\)) compact quantum groups on \(\Cont(X)\) for a connected compact space \(X\) were constructed by H. Huang~\cite {Huang:Faithful_act_CQG}. In \cite{Etingof-Walton:Semisimple_Hopf_act}  an example of a faithful action by the finite dimensional genuine compact quantum group on the algebra of 
regular function of a non\nb-smooth variety was given. However, it turns out rather formidable to construct such actions when the space is smooth (and connected) and the action is also smooth in some natural sense. In \cite{Goswami-Joardar:Rigidity_CQG_act}, it is conjectured that no smooth faithful action of genuine compact quantum group on a compact connected smooth  manifold can exist. The conjecture has been proved in that paper in two important cases: (i) when the action is isometric and (ii) when the compact quantum group is finite dimensional. We also mention the work of Etingof and Walton 
\cite{Etingof-Walton:Semisimple_Hopf_act} which gives a similar no go result in the algebraic framework.
   
It is, however, expected that genuine non\nb-compact quantum groups may have faithful smooth actions on smooth connected manifolds. Indeed, there are examples of such actions in the algebraic set-up (see~\cite{Goswami-Joardar:Rigidity_CQG_act}*{Example 14.2}). This motivates one to construct examples of faithful $\Cst$\nb-actions of non\nb-compact locally compact quantum groups on \(\Contvin(X)\) where \(X\) is a smooth connected manifold. It is also desirable to see if the actions are smooth in any suitable sense. We give a method to construct such actions using the theory of bicrossed products due to Baaj, Skandalis and Vaes~\cite{Baaj-Skandalis-Vaes:Non-semi-regular} and Vaes and Vainermann \cite{Vaes-Vainerman:Extension_of_lcqg}. There is one remarkable observation: there are several non\nb-Kac quantum groups with faithful \(\Cst\)\nb-actions (even ergodic) on \(\Contvin(X)\). This could not be possible in the realm of compact quantum groups, as any compact quantum group acting faithfully on a commutative \(\Cst\)\nb-
algebra must be of Kac type (see~\cite{Huang:Inv_subset_CQG_act}). Thus, there seems to be more freedom for getting quantum symmetries on classical spaces 
in the realm of locally compact quantum groups than the compact ones. On the other hand, it should be noted that if we are interested in actions which are isometric in a natural sense (as in~\cite{Goswami:Qnt_isometry}). We cannot possibly get any genuine locally compact quantum group actions on classical (connected) Riemannian manifolds. Indeed, such a no-go result is obtained within the class of locally compact quantum groups considered by us in this paper (see Theorem~\ref{the:rig_iso_faithful}).

\medskip
        
The plan of the paper is as follows. We gather some basic definitions and facts about locally compact quantum groups and their actions in the von Neumann as well as \(\Cst\)\nb-algebraic set-up in Subsection~\ref{subsec:LCQG}, followed by a brief account of the bicrossed product construction of locally compact groups in Subsection~\ref{subsec:Bicross}. In Section~\ref{sec:Podles_cond}, we specialise to the locally compact quantum groups arising from bicrossed product construction of two groups say \(G_{1},G_{2}\) forming a matched pair, with \(G_1\) being an abelian Lie group. We describe a natural action of this bicrossed product quantum group  on \(\Contvin(\widehat{G_1})\) and verify that it satisfies Podle\'s\nb-type density conditions. Section~\ref{sec:properties} is devoted to investigate necessary and sufficient conditions for this action to be faithful or isometric. Using this, we observe that that there is a large class of  genuine locally compact quantum groups having faithful actions on commutative \(\Cst\)\nb-algebra of \(\Contvin\)\nb-functions on a locally compact manifold in Section~\ref{sec:Example}. However, it is shown in Subsection~\ref{sec:Isometry} that no such genuine quantum group actions can be isometric. 
    
\section{Preliminaries}
\label{sec:preliminaries}
  All Hilbert spaces and \(\Cst\)\nb-algebras are assumed to be separable. 
  
  For two norm\nb-closed subsets~\(X\) and~\(Y\) of a~\(\Cst\)\nb-algebra, 
    let 
    \[
       X\cdot Y\defeq\{xy : x\in X, y\in Y\}^{\textup{CLS}},
    \]
  where CLS stands for the~\emph{closed linear span}.    
     
  For a~\(\Cst\)\nb-algebra~\(A\), let~\(\Mult(A)\) be its multiplier algebra and 
  \(\U(A)\) be the group of unitary multipliers of~\(A\). The unit of 
  \(\Mult(A)\) is denoted by~\(1_{A}\). 
   Next recall some standard facts about multipliers and morphisms of 
   \(\Cst\)\nb-algebras from~\cite{Masuda-Nakagami-Woronowicz:C_star_alg_qgrp}*{Appendix A}.
   Let~\(A\) and~\(B\) be~\(\Cst\)\nb-algebras. A \Star{}homomorphism 
  \(\varphi\colon A\to\Mult(B)\) is called \emph{nondegenerate} if 
  \(\varphi(A)\cdot B=B\). Each nondegenerate \Star{}homomorphism 
  \(\varphi\colon A\to\Mult(B)\) extends uniquely to a unital 
  \Star{}homomorphism \(\widetilde{\varphi}\) from~\(\Mult(A)\) to 
  \(\Mult(B)\). Let \(\Cstcat\) be the category of \(\Cst\)\nb-algebras with nondegenerate 
  \Star{}homomorphisms \(A\to\Mult(B)\) as morphisms \(A\to B\); let Mor(A,B) denote this 
  set of morphisms. We use the same symbol for an element of~\(\Mor(A,B)\) and its 
  unique extension from~\(\Mult(A)\) to~\(\Mult(B)\).
  
  A \emph{representation} of a
  \(\Cst\)\nb-algebra~\(A\) on a Hilbert space~\(\Hils\) is a 
  nondegenerate \Star{}homomorphism \(\pi\colon A\to\Bound(\Hils)\). 
  Since \(\Bound(\Hils)=\Mult(\Comp(\Hils))\), the nondegeneracy
  conditions \(\pi(A)\cdot\Comp(\Hils)=\Comp(\Hils)\) is equivalent 
  to begin \(\pi(A)(\Hils)\) is norm dense in~\(\Hils\),  
  and hence this is same as having a morphism 
  from~\(A\) to~\(\Comp(\Hils)\). The identity representation 
  of~\(\Comp(\Hils)\) on \(\Hils\) is denoted by~\(\Id_{\Hils}\).
  The group of unitary operators on a Hilbert space~\(\Hils\) 
  is denoted by \(\U(\Hils)\). The identity element in 
  \(\U(\Hils)\) is denoted by~\(1_{\Hils}\).
  
  We use~\(\otimes\) both for the tensor product of Hilbert spaces,  
  minimal tensor product of \(\Cst\)\nb-algebras, and von Neumann 
  algebras which is well understood from the context.
  We write~\(\Flip\) for the tensor flip \(\Hils\otimes\Hils[K]\to
  \Hils[K]\otimes\Hils\), \(x\otimes y\mapsto y\otimes x\), for two 
  Hilbert spaces \(\Hils\) and~\(\Hils[K]\).  We write~\(\flip\) for the
  tensor flip isomorphism \(A\otimes B\to B\otimes A\) for two
  \(\Cst\)\nb-algebras or von Neumann algebras \(A\) and~\(B\). 
  
 Let~\(A_{1}\), \(A_{2}\), \(A_{3}\) be \(\Cst\)\nb-algebras. 
 For any~\(t\in\Mult(A_{1}\otimes A_{2})\) we denote the 
 leg numberings on the level of~\(\Cst\)\nb-algebras as 
 \(t_{12}\defeq t\otimes 1_{A_{3}} \in\Mult(A_{1}\otimes A_{2}\otimes A_{3})\), 
 \(t_{23}\defeq1_{A_{3}}\otimes t_{12}\in\Mult(A_{3}\otimes A_{1}\otimes A_{2})\)
 and~\(t_{13}\defeq\flip_{12}(t_{23})=\flip_{23}(t_{12})\in\Mult(A_{1}\otimes A_{3}\otimes A_{2})\). 
 In particular,  let \(A_{i}=\Bound(\Hils_{i})\) for some Hilbert spaces~\(\Hils_{i}\), where 
 \(i=1,2,3\). Then for any \(t\in\Bound(\Hils_{1}\otimes\Hils_{2})\) 
 the leg numberings are obtained by replacing~\(\flip\) with the 
 conjugation by~\(\Flip\) operator.  
 \subsection{Locally compact quantum groups and their actions}
  \label{subsec:LCQG}
   For a general theory of \(\Cst\)\nb-algebraic 
   locally compact quantum groups we refer~\cites{Masuda-Nakagami-Woronowicz:C_star_alg_qgrp, 
   Kustermans-Vaes:LCQGvN}. 
      
%   \begin{definition}[\cite{Baaj-Skandalis:Unitaires}*{Definition 0.1}]
%    \label{def:C_star_bialg} 
    A~\(\Cst\)\nb-\emph{bialgebra}~\(\Bialg{A}\) is a~\(\Cst\)\nb-algebra 
    \(A\) and a comultiplication~\(\Comult[A]\in\Mor(A,A\otimes A)\)
    that is coassociative:
    \((\Id_{A}\otimes\Comult[A])\circ\Comult[A]=(\Comult[A]\otimes\Id_{A})\circ\Comult[A]\).
    Moreover, \(\Bialg{A}\) is~\emph{bisimplifiable} \(\Cst\)\nb-bialgebra 
    if~\(\Comult[A]\) satisfies the cancellation property 
    \begin{equation}
     \label{eq:cancellation}
      \Comult[A](A)\cdot (1_{A}\otimes A)=\Comult[A](A)\cdot (A\otimes 1_{A})=A\otimes A .
    \end{equation}
%   \end{definition} 
   Let~\(\varphi\) be a faithful (approximate) KMS weight
   on~\(A\)    (see~\cite{Kustermans-Vaes:LCQG}*{Section 1}). 
   The set of all positive 
   \(\varphi\)\nb-integrable and~\(\varphi\)\nb-square integrable 
   elements are defined by~\(\mathcal{M}_{\varphi}^{+}
   \defeq\{a\in A^{+}\text{ \(\mid\) \(\varphi(a)<\infty\)}\}\) and 
   \(\mathcal{N}_{\varphi}\defeq\{a\in A\text{ \(\mid\) \(\varphi(a^{*}a)<\infty\)}\}\), 
   respectively. Moreover, \(\varphi\) is  called
   \begin{enumerate} 
   \item \emph{left invariant} if     
   \(\omega((\Id_{A}\otimes\varphi)\Comult[A](a))=\omega(1)\varphi(a)\)
     for all~\(\omega\in A_{*}^{+}\), \(a\in\mathcal{M}^{+}_{\varphi}\);
   \item \emph{right invariant} if       
   \(\omega((\varphi\otimes\Id_{A})\Comult[A](a))=\omega(1)\varphi(a)\)
   for all~\(\omega\in A_{*}^{+}\), \(a\in\mathcal{M}^{+}_{\varphi}\).
   \end{enumerate}
   \begin{definition}[\cite{Kustermans-Vaes:LCQG}*{Definition 4.1}]
    \label{def:Qnt_grp}
    A \emph{locally compact quantum group} 
    (\emph{quantum groups} from now onwards) is 
    a bisimplifiable~\(\Cst\)\nb-bialgebra~\(\Qgrp{G}{A}\) with left
    and right invariant approximate KMS weights~\(\varphi\) and~\(\psi\), 
    respectively. 
   \end{definition}
   By~\cite{Kustermans-Vaes:LCQG}*{Theorem 7.14 \& 7.15}, 
   invariant weights~\(\varphi\) and~\(\psi\) are unique up to a 
   positive scalar factor; 
   hence they are called the left and right 
   \emph{Haar weights} for \(\G\). Moreover,  
   there is a unique (up to isomorphism) Pontrjagin 
   dual~\(\DuQgrp{G}{A}\) of~\(\G\), which is again a quantum group. 
   
   Next we consider the GNS triple~\((\textup{L}^{2}(\G),\pi,\Lambda)\) 
   for~\(\varphi\). There is an element \(\Multunit\in\U(\textup{L}^{2}(\G)\otimes\textup{L}^{2}(\G))\) 
   satisfying the pentagon equation:
     \begin{equation}
      \label{eq:pentagon}
      \Multunit_{23}\Multunit_{12}
     = \Multunit_{12}\Multunit_{13}\Multunit_{23}
     \qquad
     \text{in \(\U(\textup{L}^{2}(\G)\otimes\textup{L}^{2}(\G)\otimes\textup{L}^{2}(\G)).\)}
   \end{equation}
   \(\Multunit\) is called \emph{multiplicative unitary}.
   Furthermore,~\cite{Kustermans-Vaes:LCQG}*{Proposition 6.10}  shows that \(\Multunit\) 
   is manageable (in the sense of~\cite{Woronowicz:Multiplicative_Unitaries_to_Quantum_grp}*{Definition 1.2}). 
   Also~\(\Multunit\) \emph{generates}~\(\G\) (or~\(\G\) is \emph{generated} by~\(\Multunit\)) in the 
   following sense:
   \begin{enumerate}
   \item the dual multiplicative unitary 
   \(\DuMultunit\defeq\Flip\Multunit[*]\Flip\in\U(\textup{L}^{2}(\G)\otimes\textup{L}^{2}(\G))\) 
   is also manageable.
    \item the slices of~\(\Multunit\) defined by 
    \begin{alignat}{2}
     \label{eq:slice_first}
     \hat{A} &\defeq \{(\omega\otimes\Id_{\textup{L}^{2}(\G)})\Multunit :
    \omega\in\Bound(\textup{L}^{2}(\G))_*\}^\CLS ,\\
    \label{eq:slice_second}
      A &\defeq\{(\Id_{\textup{L}^{2}(\G)}\otimes\omega)\Multunit :
    \omega\in\Bound(\textup{L}^{2}(\G))_*\}^\CLS ,
   \end{alignat} 
   are nondegenerate \(\Cst\)\nb-subalgebras of \(\Bound(\textup{L}^{2}(\G))\). 
    \item  \(\Multunit\in\U(A\otimes\hat{A})\subseteq\U(\textup{L}^{2}(\G)\otimes
  \textup{L}^{2}(\G))\). 
%  We write~\(\multunit\) for~\(\Multunit\) viewed as a unitary multiplier 
%  of~\(A\otimes\hat{A}\);
    \item the comultiplication maps~\(\Comult[A]\) and 
  \(\DuComult[A]\) are defined by  
  \begin{equation}
   \label{eq:comults}  
   \Comult[A](a)\defeq\Multunit[*](1\otimes a)\Multunit ,
   \qquad
    \DuComult[A](\hat{a})\defeq\flip\big(\Multunit(\hat{a}\otimes 1)\Multunit[*]\big),
  \end{equation}
   for all~\(a\in A\), \(\hat{a}\in\hat{A}\).   
\end{enumerate}

%  The unitary \(\multunit\in\Mult(A\otimes\hat{A})\) is called the \emph{reduced bicharacter} 
%  of~\(\G\).

A general theory of locally compact quantum groups in the von\nb-Neumann algebraic framework has been developed by Kustermans and Vaes in~\cite{Kustermans-Vaes:LCQGvN}. Moreover, there is a nice interplay between 
\(\Cst\)\nb-algebraic and von\nb-Neumann algebraic locally compact quantum groups in general via multiplicative unitaries. We briefly recall this in the group case.

 Let~\(G\) be a locally compact group and~\(\mu\) be its left Haar measure. Define, 
 \(\Multunit\in\U(L^{2}(G\times G,\mu\times \mu)\) by \(\Multunit\xi(x,y)=\xi(x,x^{-1}y)\) for all 
 \(\xi\in L^{2}(G\times G,\mu\times \mu)\), \(x,y\in G\). 
 A simple computation shows that~\(\Multunit\) is a multiplicative 
 unitary. Furthermore, 
 \begin{alignat*}{2}
    A &=\Contvin(G)=\{(\Id_{\textup{L}^{2}(G,\mu)}\otimes\omega)\Multunit :
    \omega\in\Bound(\textup{L}^{2}(G))_*\}^\CLS ,\\
    M &=L^\infty(G)=\{(\Id_{\textup{L}^{2}(\G)}\otimes\omega)\Multunit :
    \omega\in\Bound(\textup{L}^{2}(\G))_*\}^{\textup{weak closure}}.
 \end{alignat*}
 Using~\eqref{eq:comults} we get~\(\Comult[M]\colon L^\infty(G)\to L^\infty(G\times G)\) by 
 \(\Comult[M](f)(x,y)=f(xy)\) for all~\(f\in L^\infty(G)\). The pair~\((M,\Comult[M])\) is a 
 von\nb-Neumann bialgebra. The left Haar weight \(\varphi\) on~\(L^{\infty}(G)\) 
 is given by the integration with respect to the left Haar measure~\(\mu\) on~\(G\): 
\(\varphi(f)\defeq\int f(h) \diff\mu (h)\) for~\(f\in L^\infty(G_{2})^{+}\). Similarly, the 
right Haar weight~\(\psi\) on~\((M,\Comult[M])\) is obtained from the to the right Haar measure on 
\(G\). The pair \(\Qgrp{G}{M}\) is the von Neumann algebraic locally compact quantum group 
associated to~\(G\).

 Similarly,~\(\Comult[A]\colon \Contvin(G)\to\Contb(G\times G)\) is obtained 
 by restricting~\(\Comult[M]\) on~\(\Contvin(G)\). The restriction of left and right Haar weight 
 of~\((M,\Comult[M])\) on~\(\Contvin(G)\) defines the left and Haar weight on~\((A,\Comult[A])\), respectively.
Thus~\((A,\Comult[A])\) is the \(\Cst\)\nb-algebraic version of~\(\G\). 

 Let~\(\hat{A}=\Cred(G)\) and~\(\hat{M}=L(G)\). Let~\(\lambda\) be the left regular representation of 
 \(G\) on~\(L^{2}(G,\mu)\). Define, \(\DuComult[M]\colon L(G)\to L(G\times G)\) by~\(\DuComult(\lambda_g)=\lambda_g
 \otimes\lambda_g\) and \(\DuComult[A]\colon \Cred(G)\to\Mult(\Cred(G\times G))\) as the restriction of~\(\DuComult[M]\). 
 The left and right invariant invariant Haar weights on~\(\hat{M}\) is given by~\(\hat{\varphi}(\lambda(f))=f({1_{G}})\) for 
 all~\(f\in\CompSupp(G)\) such that~\(\lambda(f)\in L(G)^{+}\). Then \(\DuQgrp{G}{A}\) and~\(\DuQgrp{G}{M}\) 
 are the dual of~\(\G\) in~\(\Cst\)\nb-algebraic and von Neumann algebraic setting, respectively. 

 \begin{definition}
  \label{def:cont_action}
  A \emph{\textup(right\textup) \(\Cst\)\nb-action} of~\(\G\) 
  on a \(\Cst\)\nb-algebra~\(C\) is a morphism \(\gamma\colon C\to C\otimes
  A\) with the following properties:
  \begin{enumerate}
  \item \(\gamma\) is a comodule structure, that is,
    \begin{equation}
      \label{eq:right_action}
       (\Id_{C}\otimes\Comult[A])\gamma=(\gamma\otimes\Id_{A})\gamma ;
    \end{equation}
  \item \(\gamma\) satisfies the \emph{Podleś condition}:
  \begin{equation}
   \label{eq:Podles_cond}
    \gamma(C)\cdot(1_C\otimes A)=C\otimes A .
   \end{equation}
  \end{enumerate}
 \end{definition}    
 Some authors demands~\(\gamma\) to be injective. Similarly, the von Neumann algebraic 
 version~\(\Bialg{M}\) of \(\G\) acts on von Neumann algebras 
 as well.
 \begin{definition}
  \label{def:vN_coact}
    A \emph{\textup(right\textup) von Neumann algebraic action} of~\(\G\) on a 
    von Neumann algebra~\(N\) is a faithful, normal, unital \Star{}homomorphism 
 \(\gamma\colon N\to N\otimes M\) satisfying~\((\Id_{N}\otimes\Comult[M])\gamma=(\gamma\otimes\Id_{M})\gamma\).
 \end{definition}
 
\subsection{Bicrossed product of groups}
 \label{subsec:Bicross}
 Bicrossed product construction for mathced pair of locally 
 compact quantum groups goes back to the work of Baaj and Vaes 
 \cite{Baaj-Vaes:Double_cros_prod} and Vaes and 
 Vainerman~\cite{Vaes-Vainerman:Extension_of_lcqg}. 
 In this article, we shall restrict out attention to the bicrossed product 
 construction for locally compact groups. 
 \begin{definition}[\cite{Vaes-Vainerman:Extension_of_lcqg}*{Definition 4.7}]
  \label{def:mathced_pair}
  Let~\(G_{1}\), \(G_{2}\) and~\(G\) be locally compact groups with fixed 
  left Haar measures. The pair \((G_{1},G_{2})\) is called a \emph{matched pair} 
  if
 \begin{enumerate}
  \item there exist a homomorphism \(i\colon G_{1}\hookrightarrow G\) and 
  an anti\nb-homomorphism \(j\colon G_{2}\hookrightarrow G\) with closed 
  images and homeomorphism onto these images;
  \item \(\theta\colon G_{1}\times G_{2}\to G\) defined by 
           \(\theta((g,h))=i(g)j(h)\) is an homeomorphism 
           onto an open subgroup~\(\Omega\) of~\(G\) having a 
           complement of measure zero. 
 \end{enumerate}
\end{definition}
 This allows to define almost everywhere and measurable 
 left action \((\alpha_{g})_{g\in G_{1}}\) of~\(G_{1}\) 
 on~\(G_{2}\) and right action \((\beta_{h})_{h\in G_{2}}\) on 
 \(G\), and satisfies \(j(\alpha_{g}(h))i(\beta_{h}(g))=i(g)j(h)\) 
 for almost all~\(g\in G_{1}\) and~\(h\in G_{2}\). 
 
 By~\cite{Vaes-Vainerman:Extension_of_lcqg}*{Lemma 4.9}, 
 the maps \(G_{1}\times G_{2}\to G_{2}\colon (g,h)\mapsto\alpha_{g}(h)\) and 
 \(G_{1}\times G_{2}\to G_{1}\colon (g,h)\mapsto\beta_{h}(g)\) 
 are measurable, defined almost everywhere, 
 and satisfy the following relations:
 \begin{alignat}{2}
  \label{eq:alpha_comp}
  \alpha_{gs}(h) &=\alpha_{g}(\alpha_{s}(h)), 
  \qquad \beta_{h}(gs) &=\beta_{\alpha_{s}(h)}(g)\beta_{h}(s),\\
  \label{eq:beta_comp}  
 \beta_{ht}(g) &=\beta_{h}(\beta_{t}(g)),
 \qquad  \alpha_{g}(ht) &=\alpha_{\beta_{t}(g)}(h)\alpha_{g}(t).
\end{alignat} 
 for almost all~\(g, s\in G_{1}\), \(h, t\in G_{2}\). Also, \(\alpha_{g}(1_{G_{2}})=1_{G_{2}}\), 
 and \(\beta_{h}(1_{G_{1}})=1_{G_{1}}\) for all~\(g\in G\), \(h\in H\).

Then \(\alpha\colon L^{\infty}(G_{2})\to L^{\infty}(G_{1}\times G_{2})\) 
defined by \(\alpha(f)(g,s)\defeq f(\alpha_{g}(s))\) is a (left) von Neumann algebraic 
action of the locally compact quantum group \(\G_{1}=(\Bialg{L^{\infty}(G_{1})}\)  
on~\(L^{\infty}(G_{2})\). The von Neumann algebraic version of the 
bicrossed product~\(\Qgrp{G}{M}\) is given by
\begin{equation}
 \label{eq:bicros_qnt_grp}
 M\defeq \bigl(\alpha(L^{\infty}(G_{2}))(L(G_{1})\otimes 1)\bigr)'' ,
 \qquad
 \Comult[M](z)\defeq\Multunit[*](1\otimes z)\Multunit 
\end{equation} 
 Let~\(\lambda\) be the left regular representation of~\(G_{1}\). 
 The left (and right) Haar weight on \(L^{\infty}(G_{2})\). 
 Let~\(\hat{\alpha}\) be the dual action of~\(L(G_{1})\) on the 
crossed product~\(M\). Then~\(\hat{\alpha}\colon M\to L(G_{1})\otimes M\) 
is defined by~\(\hat{\alpha}(\alpha(y))=1_{L(G_{1})}\otimes\alpha(\eta)\) 
and \(\hat{\alpha}(x\otimes 1_{L^\infty(G_2)})=\DuComult[L(G_{1})](x)\otimes 
1_{L^\infty(G_2)}\) for all~\(x\in L(G_{1})\) and~\(y\in L^{\infty}(G_2)\). 

Let~\(\hat{\varphi}_{1}\) and~\(\varphi_{2}\) be the left Haar weights on 
\(L(G_{1})\) and~\(L^{\infty}(G_{2})\), respectively. 
By~\cite{Vaes-Vainerman:Extension_of_lcqg}*{Definition 1.13}, the 
left Haar weight~\(\varphi\) on~\(\G\) is given by 
\(
\varphi\defeq \varphi_{2}\alpha^{-1}(\hat{\varphi}_{1}\otimes\Id\otimes\Id)\hat{\alpha}
\).
A simple computation shows, for any~\(f\in\mathcal{N}_{\hat{\varphi}_{1}}\) and 
\(\eta\in\mathcal{N}_{\varphi_{2}}\), 
\begin{equation}
 \label{eq:Haar_wt_prod}
 \varphi\bigl((\alpha(\eta)(\lambda(f)\otimes 1)\bigr)=\hat{\varphi}_{1}(\lambda(f))\varphi_{2}(\eta) 
 =\varphi\bigl((\lambda(f)\otimes 1)(\alpha(\eta)\bigr)
\end{equation}  

Let~\(\Hils=L^{2}(G_{1}\times G_{2})\) be the Hilbert space of square integrable 
functions with respect to the product of the left Haar measures of~\(G_{1}\) and~\(G_{2}\). 
Using \cite{Baaj-Skandalis-Vaes:Non-semi-regular}*{Definition 3.3} for left Haar measure, we 
obtain a multiplicative unitary~\(\Multunit\in\U(\Hils\otimes\Hils)\) for~\(\G\) 
defined by 
\begin{equation}
 \label{eq:Multunit_bicros}
 \Multunit\xi(g,s,h,t)\defeq\xi(\beta_{\alpha_{g}(s)^{-1}t}(h)g,s,h,\alpha_{g}(s)^{-1}t),
\end{equation}
for~\(\xi\in L^{2}(G_{1}\times G_{2}\times G_{1}\times G_{2})\),  
and for almost all~\(g, s \in G_{1}\), \(h, t\in G_{2}\).

Finally, we recall the \(\Cst\)\nb-algebraic version of~\(\G\) from  
\cite{Baaj-Skandalis-Vaes:Non-semi-regular}*{Section 3}. 
Equip the quotient space~\(G_{1}\backslash G\) 
with its canonical invariant measure class. Then 
embedding \(G_{2}\to G_{1}\backslash G\) identifies \(G_{2}\) 
with a Borel subset of \(G_{1}\backslash G\) with complement of measure zero. 
Then~\cite{Baaj-Skandalis-Vaes:Non-semi-regular}*{Proposition 3.2} gives an 
isomorphism between \(L^{\infty}(G_{2})\) and \(L^{\infty}(G_{1}\backslash G)\); 
hence we can restrict~\(\alpha\) to \(\Contvin(G_{1}\backslash G)\). 
The \(\Cst\)\nb-algebraic version~\(\Bialg{A}\) of~\(\G\) is given 
by~\cite{Baaj-Skandalis-Vaes:Non-semi-regular}*{Proposition 3.6}  
\begin{equation}
 \label{eq:Cst_bicros}
 A\defeq \alpha(\Contvin(G_{1}\backslash G))\cdot (\lambda(\Cred(G_{1}))\otimes 1),
 \qquad
 \alpha\in\Mor(\Contvin(G_{2}), A) . 
\end{equation}
The dual~\(\DuQgrp{G}{A}\) is obtained by exchanging the 
roles of~\(G_{1}\) and~\(\alpha\) by \(G_{2}\) and \(\beta\), respectively. 
\section{Existence of C*-actions of bicrossed products on spaces}
 \label{sec:Podles_cond}
 Let~\(G_{1}\), \(G_{2}\) be locally compact groups and 
 \((G_{1},G_{2})\) form a matched pairs. Let~\(\Qgrp{G}{A}\) 
 be the associated (\(\Cst\)\nb-algebraic) bicrossed product 
 quantum group. From now onwards we assume that~\(G_1\) is abelian. 
\begin{theorem}
 \label{the:Cst_coact}
There is a \(\Cst\)\nb-action of~\(\G\) on~\(\Contvin(\widehat{G_{1}})\). 
\end{theorem}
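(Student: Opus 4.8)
The plan is to realise $\Contvin(\widehat{G_{1}})$ as a right coideal inside $\Mult(A)$ and take the restriction of the comultiplication as the action. Since $G_{1}$ is abelian, the Fourier transform gives an isomorphism $\Fourier\colon\Cred(G_{1})\xrightarrow{\ \cong\ }\Contvin(\widehat{G_{1}})$, and by~\eqref{eq:Cst_bicros} the map $x\mapsto\lambda(x)\otimes 1$ embeds $\Cred(G_{1})$ as a nondegenerate \Star{}subalgebra of $\Mult(A)$. Writing $B\defeq\lambda(\Cred(G_{1}))\otimes 1\subseteq\Mult(A)$, I would prove that $B$ is a right coideal, that is $\Comult[A](B)\subseteq\Mult(B\otimes A)$, and then define the action to be the corestriction $\gamma\defeq\Comult[A]|_{B}\colon B\to\Mult(B\otimes A)$, transported to $\Contvin(\widehat{G_{1}})$ through the identification $B\cong\Cred(G_{1})\cong\Contvin(\widehat{G_{1}})$.

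The coideal property is the structural heart of the argument and I expect it to be the main obstacle. Using \(\Comult[A](a)=\Multunit[*](1\otimes a)\Multunit\) and the explicit multiplicative unitary~\eqref{eq:Multunit_bicros} (together with the inverse point transformation giving $\Multunit[*]$), a direct calculation on $L^{2}(G_{1}\times G_{2}\times G_{1}\times G_{2})$ yields, for $g_{0}\in G_{1}$,
\[
 \bigl(\Comult[A](\lambda_{g_{0}}\otimes 1)\,\xi\bigr)(g,s,h,t)
 =\xi\bigl(\beta_{\alpha_{h}(t)}(g_{0}^{-1})\,g,\ s,\ g_{0}^{-1}h,\ t\bigr),
\]
where I have used the cocycle identity $\beta_{t}(g_{0}^{-1}h)=\beta_{\alpha_{h}(t)}(g_{0}^{-1})\beta_{t}(h)$ from~\eqref{eq:alpha_comp} and that $G_{1}$ is abelian. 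The decisive observation is that the twist $\beta_{\alpha_{h}(t)}(g_{0}^{-1})$ on the first leg depends on the second\nb-leg variables $(h,t)$ only through $\alpha_{h}(t)\in G_{2}$. Hence the displayed operator factors as $W_{g_{0}}\,(1\otimes(\lambda_{g_{0}}\otimes 1))$, where $W_{g_{0}}=(\Id_{B}\otimes\alpha)(\Theta_{g_{0}})$ and $\Theta_{g_{0}}\in\Mult(B\otimes\Contvin(G_{2}))$ is the strictly continuous function $G_{2}\ni u\mapsto\lambda_{\beta_{u}(g_{0}^{-1})}\otimes 1$. Because $\alpha\in\Mor(\Contvin(G_{2}),A)$, the map $\Id_{B}\otimes\alpha$ carries $\Mult(B\otimes\Contvin(G_{2}))$ into $\Mult(B\otimes A)$; this gives $\Comult[A](\lambda_{g_{0}}\otimes 1)\in\Mult(B\otimes A)$, and integrating against $f\in L^{1}(G_{1})$ and passing to closed linear spans establishes $\Comult[A](B)\subseteq\Mult(B\otimes A)$. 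It is precisely here that the matched\nb-pair relations~\eqref{eq:alpha_comp}–\eqref{eq:beta_comp} are essential: the naive guess $\Comult[A]|_{B}=(\iota\otimes\iota)\DuComult$ fails for non\nb-trivial $\beta$, and the coideal property survives only because the $\beta$\nb-twist is absorbed by the embedded copy $\alpha(\Contvin(G_{2}))$.

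Granting this, the comodule identity~\eqref{eq:right_action} for $\gamma$ is immediate from the coassociativity \((\Id_{A}\otimes\Comult[A])\Comult[A]=(\Comult[A]\otimes\Id_{A})\Comult[A]\) restricted to $B$, and $\gamma$ is a morphism as the corestriction of $\Comult[A]$ to the coideal $B$. It then remains to verify the Podleś condition~\eqref{eq:Podles_cond}, equivalently $\Comult[A](B)\cdot(1\otimes A)=B\otimes A$, which also yields nondegeneracy. For this I would start from the cancellation property~\eqref{eq:cancellation}, $\Comult[A](A)\cdot(1\otimes A)=A\otimes A$, and use the factorisation $A=\alpha(\Contvin(G_{2}))\cdot B$ from~\eqref{eq:Cst_bicros} together with the companion computation
\[
 \bigl(\Comult[A](\alpha(\phi))\,\xi\bigr)(g,s,h,t)=\phi\bigl(\alpha_{g}(s)\,\alpha_{h}(t)\bigr)\,\xi(g,s,h,t),
 \qquad \phi\in\Contvin(G_{2}),
\]
which transfers the first\nb-leg $\alpha(\Contvin(G_{2}))$\nb-factor of $A$ into the second leg. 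Varying the second leg over $1\otimes B$ (using $G_{1}$ abelian, so that $(\lambda_{g_{0}}\otimes1)(\lambda_{g_{1}}\otimes1)=\lambda_{g_{0}g_{1}}\otimes1$) and over $1\otimes\alpha(\Contvin(G_{2}))$ then lets the first\nb-leg translation parameter range and fills out $B\otimes A$, reducing the density to the group\nb-level cancellation in $G_{1}$ and the generation of $A$. This last density is the Podleś\nb-type condition highlighted in the statement.
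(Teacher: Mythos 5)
Your construction of the action itself coincides with the paper's: there too one sets $\gamma=\Comult[A]\circ i$ with $i(x)=x\otimes 1$ on $\Cred(G_1)\cong\Contvin(\widehat{G_1})$, and Proposition~\ref{prop:def_gamma} establishes exactly your coideal property by the same mechanism --- the $\beta$\nb-twist in the first leg depends on $(h,t)$ only through $\alpha_h(t)$, so it is absorbed into $\alpha(\Contvin(G_2))\subseteq\Mult(A)$ (the paper packages this as the strictly continuous family $T_z$ rather than your $W_{g_0}$, but it is the same computation). The comodule identity from coassociativity is likewise identical. Up to this point your proposal is correct and matches the paper.

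The gap is in the Podleś condition, and it is the heart of the theorem. First, even the inclusion $\gamma(B)\cdot(1\otimes A)\subseteq B\otimes A$ is not free: your factorisation only shows $\Comult[A](\lambda_{g_0}\otimes 1)\in\Mult(B\otimes A)$, and multiplying a multiplier by $1\otimes A$ does not put you inside $B\otimes A$ --- you must show that $\gamma(\lambda(f)\otimes 1)(1\otimes a)$, read as an $A$\nb-valued function on the \emph{non-compact} space $\widehat{G_1}$, vanishes at infinity. After Fourier transform this is an oscillatory integral of the form $\int f(z)\langle\beta_{\cdot}(z),p\rangle(\cdots)\,\diff\mu_1(z)$, whose decay in $p$ has to be proved; this is the content of Proposition~\ref{prop:str_podles} (a change of variables plus dominated convergence applied to $(\Id\otimes\varphi)(\hat w(p)^{*}\hat w(p))$). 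Second, the density direction cannot be reduced to the cancellation property of $\Comult[A]$ in the way you suggest: from $A=\alpha(\Contvin(G_2))\cdot B$ one only obtains $A\otimes A=\Comult[A](\alpha(\Contvin(G_2)))\cdot\bigl(\Comult[A](B)\cdot(1\otimes A)\bigr)$, and there is no general way to ``cancel'' the first factor to conclude $\Comult[A](B)\cdot(1\otimes A)=B\otimes A$; note that these bicrossed products need not even be semi-regular, so no soft regularity argument is available either. The paper instead runs a Vaes--Van Daele-type argument through the GNS construction of the Haar weight, and the decay estimate of Proposition~\ref{prop:str_podles} is used precisely to upgrade locally uniform convergence of the approximating sums $F_N$ to uniform convergence over all of $\widehat{G_1}$. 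Your sketch never engages with the non-compactness of $\widehat{G_1}$, which is where the real work lies.
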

Throughout~\(\lambda\) denotes the (right) regular representation of~\(G_{1}\): 
\(\lambda(f)\xi'(h)\defeq\int f(g)\xi'(hg) \diff\mu_{1}(g)\), 
for~\(f\in\CompSupp(G_{1})\) and \(\xi'\in L^{2}(G_{1})\).

Clearly, we have an element~\(i\in\Mor(\Cred(G_1),A)\) and its extension, 
denoted by~\(i\) again, to~\(\Mult(\Cred(G_1))\) is given by 
\(i(x)\defeq x\otimes 1\) for all~\(x\in\Mult(\Cred(G_{1}))\). 
Hence, \(\gamma\defeq\Comult[A]\circ i\) is an element 
in \(\Mor(\Cred(G_{1}),A\otimes A)\). In order to interpret \(\gamma\) 
as the desired \(\Cst\)\nb-action in Theorem~\ref{the:Cst_coact} 
following proposition will be crucial.
\begin{proposition}
 \label{prop:def_gamma}
 \(\gamma\) is a \Star{}homomorphism from~\(\Cred(G_{1})\) to~\(\Mult(\Cred(G_{1})\otimes A)\) and 
 defined by 
 \begin{equation}
  \label{eq:def_gamma}
  \gamma(\lambda(f))\xi(g,h,t)\defeq \int f(z)\xi(g\beta_{\alpha_{h}(t)}(z),hz,t) \diff\mu_{1}(z)
 \end{equation}
 for all~\(f\in\CompSupp(G_{1})\) and~\(\xi\in L^{2}(G_{1}\times G_{1}\times G_{2})\). 
\end{proposition}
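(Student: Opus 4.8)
The plan is to compute \(\gamma(\lambda(f))=\Comult[A](i(\lambda(f)))\) directly from the defining conjugation \eqref{eq:comults}, \(\Comult[A](a)=\Multunit[*](1\otimes a)\Multunit\), and then to read off \eqref{eq:def_gamma} by simplifying with the matched\nb-pair cocycle relations. First I would record the operators entering the conjugation explicitly on \(\Hils\otimes\Hils=L^{2}(G_{1}\times G_{2}\times G_{1}\times G_{2})\) with coordinates \((g,s,h,t)\). Since \(i(x)=x\otimes 1\) and \(\lambda\) is the right regular representation, \(i(\lambda(f))\) acts on \(\Hils=L^{2}(G_{1}\times G_{2})\) by \((i(\lambda(f))\eta)(h,t)=\int f(z)\eta(hz,t)\,\diff\mu_{1}(z)\), so that
\[
   (1\otimes i(\lambda(f)))\zeta(g,s,h,t)=\int f(z)\,\zeta(g,s,hz,t)\,\diff\mu_{1}(z).
\]

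Next I would view \(\Multunit\) as the Koopman (pullback) operator of the almost everywhere defined transformation \(\Phi(g,s,h,t)=(\beta_{\alpha_{g}(s)^{-1}t}(h)g,\,s,\,h,\,\alpha_{g}(s)^{-1}t)\) underlying \eqref{eq:Multunit_bicros}. As \(\Multunit\) is a multiplicative unitary given by pure pullback along \(\Phi\) (no Jacobian appears in the formula), \(\Phi\) preserves the product Haar measure and \(\Multunit[*]\) is pullback along \(\Phi^{-1}\); solving the four defining equations yields \(\Phi^{-1}(g,s,h,t)=(\beta_{t}(h)^{-1}g,\,s,\,h,\,\alpha_{\beta_{t}(h)^{-1}g}(s)t)\). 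Composing the three maps --- first \(\Multunit\), then \(1\otimes i(\lambda(f))\), then \(\Multunit[*]\) --- and using that after the outer substitution the modular factor collapses as \(\alpha_{g_{0}}(s)^{-1}(\alpha_{g_{0}}(s)t)=t\) with \(g_{0}=\beta_{t}(h)^{-1}g\), I expect to reach
\[
   \gamma(\lambda(f))\zeta(g,s,h,t)=\int f(z)\,\zeta\bigl(\beta_{t}(hz)\beta_{t}(h)^{-1}g,\,s,\,hz,\,t\bigr)\,\diff\mu_{1}(z).
\]

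The decisive feature of this expression is that the coordinate \(s\) is left untouched and occurs nowhere else, so \(\gamma(\lambda(f))\) is the identity in the \(s\)\nb-variable. This is precisely what lifts the a priori conclusion \(\gamma(\Cred(G_{1}))\subseteq\Mult(A\otimes A)\) --- automatic since \(\gamma=\Comult[A]\circ i\) is a composite of morphisms --- to \(\gamma(\Cred(G_{1}))\subseteq\Mult(\Cred(G_{1})\otimes A)\); indeed the formula exhibits the first leg as built from right translations on \(G_{1}\), which are multipliers of \(\Cred(G_{1})\). Dropping \(s\) and regarding the operator on \(L^{2}(G_{1}\times G_{1}\times G_{2})\), it remains to identify the \(G_{1}\)\nb-component \(\beta_{t}(hz)\beta_{t}(h)^{-1}g\) with \(g\,\beta_{\alpha_{h}(t)}(z)\). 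Here I would use that \(G_{1}\) is abelian to write \(hz=zh\) and apply the cocycle relation \(\beta_{h}(gs)=\beta_{\alpha_{s}(h)}(g)\beta_{h}(s)\) of \eqref{eq:alpha_comp}, with the \(G_{2}\)\nb-element taken to be \(t\), the first \(G_{1}\)\nb-argument \(z\) and the second \(h\); this gives \(\beta_{t}(zh)=\beta_{\alpha_{h}(t)}(z)\beta_{t}(h)\), hence \(\beta_{t}(hz)\beta_{t}(h)^{-1}=\beta_{\alpha_{h}(t)}(z)\), and commutativity of \(G_{1}\) moves this past \(g\) to produce \eqref{eq:def_gamma}. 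That \(\gamma\) is a \Star{}homomorphism is then immediate from \(\gamma=\Comult[A]\circ i\).

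The step I expect to be the main obstacle is the middle one: inverting \(\Phi\) correctly and then pushing the inner matched\nb-pair expressions through the outer substitution without order or inversion errors, in particular verifying that the factor \(\alpha_{g}(s)^{-1}t\) collapses so that every residual dependence on \(s\) disappears. The unitarity of \(\Multunit\) and the resulting fact that \(\Multunit[*]\) is honest pullback along \(\Phi^{-1}\) (with no leftover Radon\nb-Nikodym density) should be invoked from the construction in \cite{Baaj-Skandalis-Vaes:Non-semi-regular} rather than re\nb-proved here.
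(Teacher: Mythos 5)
Your computation of the conjugation \(\Multunit[*](1\otimes i(\lambda(f)))\Multunit\) is exactly the route the paper takes, and it is correct: the inverse of the underlying transformation is as you state, the factor \(\alpha_{g_{0}}(s)^{-1}(\alpha_{g_{0}}(s)t)=t\) does collapse with \(g_{0}=\beta_{t}(h)^{-1}g\), and the cocycle identity \(\beta_{t}(zh)=\beta_{\alpha_{h}(t)}(z)\beta_{t}(h)\) from \eqref{eq:alpha_comp} together with commutativity of \(G_{1}\) turns \(\beta_{t}(hz)\beta_{t}(h)^{-1}g\) into \(g\beta_{\alpha_{h}(t)}(z)\), reproducing the paper's intermediate formula \eqref{eq:gamma} and, after dropping the inert \(s\)\nb-variable, \eqref{eq:def_gamma}.

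The gap is in your final membership claim. Knowing that the operator is the identity in \(s\) and that its first leg ``is built from right translations on \(G_{1}\)'' does not yet place it in \(\Mult(\Cred(G_{1})\otimes A)\): the translation in the first leg is by the amount \(\beta_{\alpha_{h}(t)}(z)\), which depends on the variables \((h,t)\) of the second tensor factor, so \(\gamma(\lambda(f))\) is not of the form (multiplier of \(\Cred(G_{1})\)) tensor (multiplier of \(A\)), nor obviously a limit of sums of such. Being trivial in \(s\) and lying in \(\Mult(A\otimes A)\) is also not enough, since for instance multiplication operators in the \(g\)\nb-variable have these properties without being multipliers of \(\Cred(G_{1})\) in the first leg. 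This codomain statement is the actual content of the proposition --- the formula itself is a routine computation --- and the paper spends the second half of its proof on it: using that \(G_{1}\) is abelian, it Fourier\nb-transforms the first leg so that \(\Mult(\Cred(G_{1})\otimes A)\) is identified with \(\Contb(\widehat{G_{1}},\Mult(A))\) (bounded strictly continuous functions), rewrites \(\gamma(\lambda(f))\) as \(\int f(z)T_{z}\,\diff\mu_{1}(z)\) with \(T_{z}(\hat g)\xi_{2}(h,t)=\langle\beta_{\alpha_{h}(t)}(z),\hat g\rangle\xi_{2}(hz,t)\), and checks that \(\hat g\mapsto T_{z}(\hat g)\) is strictly continuous with values in \(\Mult(A)\); here one also uses that for fixed \(\hat g\) and \(z\) the multiplication operator by \((h,t)\mapsto\langle\beta_{\alpha_{h}(t)}(z),\hat g\rangle\) equals \(\alpha(F)\) for the bounded continuous function \(F(x)=\langle\beta_{x}(z),\hat g\rangle\) on \(G_{2}\), hence lies in \(\Mult(A)\). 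Some argument of this kind is needed to close your proof.
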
  
\begin{proof}
 The adjoint~\(\Multunit[*]\in\U(L^{2}(G_{1}\times G_{2}\times G_{1}\times G_{2}))\) of 
 the multiplicative unitary~\(\Multunit\) of~\(\G\) in~\eqref{eq:Multunit_bicros} is defined 
 by 
 \[
    \Multunit[*]\xi'(g,s,h,t)\defeq\xi'(g',s,h,\alpha_{g'}(s)t)
    \qquad\text{for~\(\xi'\in L^{2}(G_{1}\times G_{2}\times G_{1}\times G_{2})\),}
 \]    
 where~\(g'\defeq\beta_{t}(h)^{-1}g\) for all~\(g,h\in G_{1}\) and~\(s,t\in G_{2}\). 
 Using the definition of the comultiplication~\ref{eq:comults} and 
 the property~\eqref{eq:beta_comp} of \(\beta\) we obtain
 \begin{align}
  \label{eq:gamma}
 \gamma(\lambda(f)) \xi'(g,s,h,t) \nonumber
 &= \Comult[A](1_{L^{2}(G_{1}\times G_{2})}\otimes\lambda(f)\otimes 1_{L^{2}(G_{2})})\xi'(g,s,h,t)\\ \nonumber
 &=\bigl(\Multunit[*](1_{L^{2}(G_{1}\times G_{2})}\otimes\lambda(f)\otimes 1_{L^{2}(G_{2})})\Multunit\bigr)\xi'(g,s,h,t)\\ \nonumber
 &= \bigl((1_{L^{2}(G_{1}\times G_{2})}\otimes\lambda(f)\otimes 1_{L^{2}(G_{2})})\Multunit\bigr)\xi'(g',s,h,\alpha_{g'}(s)t)\\ \nonumber 
 &= \Multunit\Bigl(\int f(z)\xi'(g',s,hz,\alpha_{g'}(s)t)\diff\mu_{1}(z)\Bigr)\\ 
 &=\int f(z)\xi'(g\beta_{\alpha_{h}(t)}(z),s,hz,t)\diff\mu_{1}(z).
\end{align} 
Since \(G_{1}\) is abelian, we indenify~\(\Mult(\Cred(G_{1})\otimes A)\) with 
\(\Contb(\widehat{G_{1}},\Mult(A))\subset\Bound(L^{2}(\widehat{G_{1}}\times G_{1}\times G_{2}))\) using 
the Fourier transform. 

Let~\(f\colon \widehat{G_1}\to\Mult(A)\) be a strictly 
continuous function. Define an operator~\(M_{f}\) acting on 
\(L^{2}(\widehat{G_1}\times G_{1}\times G_{2})\) by 
\[
  M_{f}(\xi_{1}\otimes\xi_{2})(\hat{g},h,t) 
  \defeq \xi_{1}(\hat{g}) (f(\hat{g})\xi_{2})(h,t) 
  \quad\text{for all~\(\xi_{1}\in L^{2}(\widehat{G_1})\), 
  \(\xi_{2}\in L^{2}(G_{1}\times G_{2})\),}
\]
which is an element in 
\(M_{f}\in\Contb(\widehat{G_1},\Mult(A))\cong\Mult(\Cred(G_{1})\otimes A)\). 

Recall the dual pairing~\(\langle\cdot,\cdot\rangle\colon G_{1}\times\widehat{G_1}\to \mathbb{T}\) 
defined by~\(\langle h,\hat{g}\rangle\defeq \hat{g}(h)\) for all 
\(h\in G_{1}\) and~\(\hat{g}\in\widehat{G_1}\). Let~\(D\) be a~\(\Cst\)\nb-algebra. 
For an element \(F\in L^{1}(G_{1},D)\), the Fourier transform~\(\widehat{F}\) is  defined by 
\(\widehat{F}(\hat{g})\defeq\int F(h)\langle h, \hat{g}\rangle \diff\mu_{1}(h)\) for 
\(\hat{g}\in\widehat{G_{1}}\), and \(\widehat{F}\) is an element in~\(\Contvin(\widehat{G_{1}},D)\).

Then, for a fixed~\(z\in G_{1}\), consider~\(T_{z}\colon\widehat{G_{1}}\to\Mult(A)\) 
defined by 
\[
 T_{z}(\hat{g})\xi_{2}(h,t)\defeq \langle 
\beta_{\alpha_{h}(t)}(z),\hat{g}\rangle\xi_{2}(hz,t)  
\qquad 
\text{for all~\(\xi_{2}\in L^{2}(G_{1}\times G_{2})\).}
\] 
Clearly, \(\hat{g}\mapsto T_{z}(\hat{g})\) is continuous in the strict 
topology. Hence, any~\(f\in \CompSupp(G_{1})\) gives 
\(\int f(z)T_{z} \diff\mu_{1}(z)\) is an element 
in~\(\Contb(\widehat{G_1},\Mult(A)) \cong\Mult(\Cred(G_{1})\otimes A)\).

Using the Fourier transform in the first leg of~\eqref{eq:gamma} we observe 
\[
 \gamma(\lambda(f))=\Flip_{12} (\int f(z) (\Id_{L^{2}(G_{2})}\otimes T_{z} \diff\mu_{1}(z))
  \Flip_{12}
\]
in~\(\Bound(L^{2}(\widehat{G_1}\times G_{2}\times G_{1}\times G_{2}))\). 
Finally, putting~\(s=1_{G_{2}}\) in~\eqref{eq:gamma} we obtain~\eqref{eq:def_gamma}.
\end{proof}
We gather some standard facts related to Fourier transform in the next lemma.
\begin{lemma}
 \label{lemm:var_prop}
 Let \(K\) be a compact subset of~\(G_{1}\). Define 
 \begin{align*}
   S(K,D) &\defeq \{F\in L^{1}(G_{1},D) \text{ \(\mid\)~\(\textup{supp}(F)\subset K\)}\}\subset L^{1}(G,D);\\ 
   \widehat{S(K,D)} &\defeq\{\widehat{F} \text{ \(\mid\)~\(F\in S(K,D)\)}\}\subset 
 \Contvin(\widehat{G_{1}},D).
 \end{align*}
 We have the following:
\begin{enumerate}
 \item\label{eq:can_der} Let~\(\widehat{G_{1}}\) is a Lie group, 
 and let \(\delta=(\delta_{1},\cdots,\delta_{n})\) be 
 the canonical derivation on \(\widehat{G_{1}}\) for~\(i=1,\cdots , n\). 
 Then \(\widehat{S(K,D)}\subset\dom(\tilde{\delta})\) 
 and \(\tilde{\delta}\bigl(\widehat{S(K,D)}\bigr)\subset \widehat{S(K,D)}\), 
 where~\(\tilde{\delta}_{i}\defeq\delta_{i}\otimes \Id_{D}\) for~\(i=1,\cdots , n\), 
 and~\(\tilde{\delta}\defeq(\tilde{\delta}_{1},\cdots,\tilde{\delta}_{n})\).
 \item\label{eq:isometry} \(S(K,D), \widehat{S(K,D)}\subset L^{2}(G_{1},D)\).
 \item\label{eq:morph} Let~\(D'\) be a~\(\Cst\)\nb-algebra and 
 \(\rho\colon D\to D'\) be a completely bounded map.  Then 
 \((\Id\otimes\rho)S(K,D)\subset S(K,D')\) and 
 \((\Id\otimes\rho)\widehat{S(K,D)}\subset\widehat{S(K,D')}\).
\end{enumerate}
\end{lemma}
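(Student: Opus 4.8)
All three assertions rest on the behaviour of the Fourier transform $F\mapsto\widehat F$ under pointwise operations: multiplying $F$ by a suitable continuous function on $G_1$ turns into a canonical derivation acting on $\widehat F$ over $\widehat{G_1}$, while postcomposing $F$ with a linear map on $D$ commutes with the transform. The plan is to dispose of the third and second assertions first, as they are essentially formal, and then to concentrate on the first, which carries the analytic content.

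For the naturality statement, recall that for a completely bounded (in particular bounded) linear $\rho\colon D\to D'$ the map $\Id\otimes\rho$ sends $F\in L^1(G_1,D)$ to $\rho\circ F$. Then $\norm{\rho\circ F}_1\le\norm{\rho}\,\norm{F}_1$ and $\textup{supp}(\rho\circ F)\subset\textup{supp}(F)\subset K$ (as $\rho(0)=0$), which gives $(\Id\otimes\rho)S(K,D)\subset S(K,D')$. Since a bounded linear map commutes with the $D$\nb-valued Bochner integral, $\widehat{\rho\circ F}(\hat g)=\int\rho(F(h))\langle h,\hat g\rangle\,\diff\mu_1(h)=\rho\bigl(\widehat F(\hat g)\bigr)$, whence $(\Id\otimes\rho)\widehat F=\widehat{\rho\circ F}\in\widehat{S(K,D')}$; this is the second half.

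For the $L^2$ statement I would use only that $K$ is compact, so $\mu_1(K)<\infty$: the (bounded) functions supported on $K$ then satisfy $\norm{F}_2^2\le\mu_1(K)\,\norm{F}_\infty^2<\infty$, giving $S(K,D)\subset L^2(G_1,D)$. The companion inclusion $\widehat{S(K,D)}\subset L^2(\widehat{G_1},D)$ is the Plancherel theorem for the abelian group $G_1$, tensored with $\Id_D$: the scalar Fourier transform extends to an isometry $L^2(G_1,D)\to L^2(\widehat{G_1},D)$, so $\widehat F$ lands in $L^2(\widehat{G_1},D)$.

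The heart is the first assertion. The computation I would carry out is that differentiating a character of the Lie group $\widehat{G_1}$ amounts to multiplying by a fixed continuous function on $G_1$. Writing $\chi_h(\hat g)\defeq\langle h,\hat g\rangle$ and letting $t\mapsto\exp(tX_i)$ be the one\nb-parameter subgroup defining $\delta_i$, the homomorphism property of $\chi_h$ gives $\delta_i\chi_h=\phi_i(h)\,\chi_h$, where $\phi_i(h)\defeq\frac{\diff}{\diff t}\big|_{t=0}\langle h,\exp(tX_i)\rangle$ is a purely imaginary quantity depending continuously on $h$. I would then differentiate under the integral sign: since $\bigl|\tfrac1t(\langle h,\exp(tX_i)\rangle-1)\bigr|\le\abs{\phi_i(h)}\le\sup_{h\in K}\abs{\phi_i(h)}<\infty$, the difference quotients of $\widehat F$ are dominated, uniformly in $\hat g$, by the $L^1$ function $h\mapsto(\sup_K\abs{\phi_i})\,\norm{F(h)}$, so dominated convergence for Bochner integrals yields convergence in the sup\nb-norm of $\Contvin(\widehat{G_1},D)$, whence $\widehat F\in\dom(\tilde\delta_i)$ and $\tilde\delta_i\widehat F=\int F(h)\phi_i(h)\chi_h\,\diff\mu_1(h)=\widehat{\phi_i F}$. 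As $\phi_i$ is continuous, $\phi_i F$ has support in $K$ and stays in $L^1$, so $\phi_i F\in S(K,D)$ and $\tilde\delta_i\widehat F=\widehat{\phi_i F}\in\widehat{S(K,D)}$; iterating over $i=1,\dots,n$ gives both $\widehat{S(K,D)}\subset\dom(\tilde\delta)$ and the invariance $\tilde\delta\bigl(\widehat{S(K,D)}\bigr)\subset\widehat{S(K,D)}$. The main obstacle is precisely this step: correctly identifying the multiplier $\phi_i$ arising from the dual pairing and securing a domination uniform in $\hat g$, so that the derivative exists in the norm defining $\dom(\tilde\delta_i)$ rather than merely pointwise.
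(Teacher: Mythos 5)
Your argument is correct and follows essentially the same route as the paper's (which compresses it into three lines): the canonical derivations act on the characters \(h\mapsto\langle h,\cdot\rangle\) as multiplication by a continuous function of \(h\), so one differentiates under the integral sign; part (2) is Plancherel; part (3) is naturality of the Bochner integral under bounded linear maps. The one caveat is in (2): your estimate \(\norm{F}_2^2\le\mu_1(K)\norm{F}_\infty^2\) requires \(F\) bounded, which a general element of \(L^1\) with compact support need not be --- but this gap is already present in the lemma as stated (and in the paper's one-line justification), and is harmless for the compactly supported continuous functions to which the lemma is actually applied.
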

\begin{proof}
 By definition~\(\delta_{i}(\lambda_{g})\defeq\rho_{i}(g)\lambda_{g}\), where 
 \(\lambda_{g}\in\Mult(\Cred(G_{1}))\cong\Contb(\widehat{G_{1}})\) 
 and~\(\rho_{i}\in\widehat{G_{1}}\) for~\(i=1,\cdots, n\); hence 
 gives~\ref{eq:can_der}. The second fact follows because the 
 Fourier transform is \(L^{2}\)\nb-isometry. The last fact is 
 trivial.
\end{proof}
 For any~\(f\in\CompSupp(G_{1})\) and~\(\eta\in\CompSupp(G_{2})\) 
 define~\(\hat{\pi}_{1}(f)\defeq \lambda(f)\otimes 1\) and~\(\pi_{2}(\eta)
 \defeq\alpha(\eta)\). Therefore, \(\hat{\pi}_{1}(f)\pi_{2}(\eta)\in A\). 
 \begin{proposition}
  \label{prop:str_podles}
  Define \(w\defeq\gamma(\hat{\pi}_{1}(f))(1\otimes\pi_{2}(\eta))
 \in S(K,\Mult(A))\), where \(K=\textup{supp}(f)\). 
  Then~\(p\mapsto (\Id\otimes\varphi)(\hat{w}(p)^{*}\hat{w}(p))\in\Contvin(\widehat{G_{1}})\). 
 \end{proposition}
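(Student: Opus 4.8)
The plan is to make \(\hat w\) completely explicit, and then recognise \(p\mapsto(\Id\otimes\varphi)(\hat w(p)^*\hat w(p))\) as the Fourier transform of a fixed, compactly supported scalar function on \(G_1\); the conclusion will then be immediate from Riemann--Lebesgue. First I would unwind \(w=\gamma(\hat\pi_1(f))(1\otimes\pi_2(\eta))\) using Proposition~\ref{prop:def_gamma} together with the crossed\nb-product description \(A=\alpha(\Contvin(G_2))\cdot(\lambda(\Cred(G_1))\otimes 1)\) of~\eqref{eq:Cst_bicros}. The operators \(T_z\) appearing in the proof of Proposition~\ref{prop:def_gamma} can be written as \(T_z=\alpha(\rho^{p}_z)\lambda_z\) with \(\rho^{p}_z(s)=\langle\beta_s(z),p\rangle\), because the subscript \(\alpha_h(t)\) in their definition enters only through \(G_2\). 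Using that \(G_1\) is abelian, so that \(\alpha_h\) and \(\alpha_z\) commute and \(\lambda_z\alpha(\eta)=\alpha(\eta\circ\alpha_z)\lambda_z\), I would then collapse everything into the single crossed\nb-product integral \(\hat w(p)=\int_K\alpha(g^{p}_z)\lambda_z\,\diff\mu_1(z)\) with \(g^{p}_z(s)=f(z)\,\langle\beta_s(z),p\rangle\,\eta(\alpha_z(s))\). Since the \(z\)\nb-integration runs only over \(\textup{supp}(f)=K\), this is exactly what exhibits \(w\) as the asserted element of \(S(K,\Mult(A))\), and \(\hat w\) as an element of \(\widehat{S(K,\Mult(A))}\).

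Next I would evaluate the weight. Writing \(w\) through its Fourier coefficients and using sesquilinearity, \((\Id\otimes\varphi)(\hat w(p)^*\hat w(p))=\int_{G_1}\langle u,p\rangle\,\Psi(u)\,\diff\mu_1(u)\), where \(\Psi(u)\) is the weight of the pairing \(w(z)^*w(zu)\) integrated over \(z\in G_1\). The main computation is to extract \(\Psi\) from~\eqref{eq:Haar_wt_prod}: for a crossed\nb-product integral \(\int\alpha(k_z)\lambda_z\,\diff\mu_1(z)\) the left Haar weight reads off the coefficient at \(z=1_{G_1}\) and applies \(\varphi_2\). Forming \(\hat w(p)^*\hat w(p)\), commuting the \(\lambda\)'s past the \(\alpha\)'s by covariance and by the cocycle relations~\eqref{eq:alpha_comp}--\eqref{eq:beta_comp}, and isolating the required coefficient, turns \(\Psi(u)\) into an explicit integral over \(G_1\times G_2\) built from \(f\), the \(\alpha_z\)\nb-translates of \(\eta\), and the characters \(\langle\beta_s(z),p\rangle\). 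Because \(w(z)\) vanishes outside \(z\in K\), the pairing \(w(z)^*w(zu)\) vanishes unless \(z\in K\) and \(zu\in K\), so \(\Psi\) is supported in the compact set \(K^{-1}K\); and since \(f,\eta\) are compactly supported and the matched\nb-pair data are bounded (with unimodular characters), \(\Psi\) is a bounded measurable function with compact support.

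Finally, \(\Psi\in\CompSupp(G_1)\subset L^1(G_1)\cap L^2(G_1)\) by the Fourier\nb-transform facts of Lemma~\ref{lemm:var_prop}, so its transform \(p\mapsto\widehat{\Psi}(p)=(\Id\otimes\varphi)(\hat w(p)^*\hat w(p))\) is continuous (being the transform of an \(L^1\)\nb-function) and vanishes at infinity (Riemann--Lebesgue), i.e.\ it lies in \(\Contvin(\widehat{G_1})\), as claimed. I expect the genuine obstacle to be the middle step: organising \(\hat w(p)^*\hat w(p)\) inside the crossed product and applying~\eqref{eq:Haar_wt_prod} while correctly tracking the \(\alpha\)- and \(\beta\)-twists and the modular/Jacobian factors of the matched pair. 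In particular the delicate point is to verify that the character contributions \(\langle\beta_s(z),p\rangle\) assemble into a \emph{nontrivial} compactly supported kernel \(\Psi\) whose transform genuinely decays, rather than cancelling against their conjugates; getting this bookkeeping right, and checking finiteness so that \(\hat w(p)\) is square\nb-integrable for every \(p\), is where the real work lies.
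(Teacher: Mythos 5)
Your first step is right, and it agrees with what the paper's own machinery gives: unwinding Proposition~\ref{prop:def_gamma} one does get
\[
  \hat w(p)=\int_{K} \alpha\bigl(g^{p}_{z}\bigr)(\lambda_{z}\otimes 1)\,\diff\mu_{1}(z),
  \qquad
  g^{p}_{z}(s)=f(z)\,\langle\beta_{s}(z),p\rangle\,\eta(\alpha_{z}(s)),
\]
and your closing step (Riemann--Lebesgue for an \(L^{1}\)\nb-kernel on \(G_{1}\)) is also how the paper finishes, except that the paper keeps the \(G_{2}\)\nb-integral on the outside: it substitutes \(g\mapsto\beta_{s}(g)\) fibrewise, applies Riemann--Lebesgue to each \(G_{s}\in L^{1}(G_{1})\) with \(\norm{G_{s}}_{1}=\norm{f}_{2}^{2}\abs{\eta}^{2}(s)\), and then uses dominated convergence in \(s\). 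Note that compact support of your \(\Psi\) is neither available nor needed: \(\beta_{s}(K)\) is only a measurable image and need not stay in a fixed compact set as \(s\) ranges over \(\textup{supp}(\eta)\); integrability is what matters.

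The fatal problem is that the middle step --- the only nontrivial one --- is not carried out, and the route you describe, followed literally, does not produce the claimed kernel. You say the weight of a crossed-product integral \(\int\alpha(k_{u})(\lambda_{u}\otimes 1)\,\diff\mu_{1}(u)\) ``reads off the coefficient at \(u=1_{G_{1}}\) and applies \(\varphi_{2}\)''; that is indeed what~\eqref{eq:Haar_wt_prod} says, since \(\hat\varphi_{1}(\lambda(h))=h(1_{G_{1}})\). But then, writing \(\hat w(p)^{*}\hat w(p)=\int\alpha(m^{p}_{u})(\lambda_{u}\otimes 1)\,\diff\mu_{1}(u)\), only the coefficient \(m^{p}_{1_{G_{1}}}=\int\bigl(\overline{g^{p}_{z}}\,g^{p}_{z}\bigr)\circ\alpha_{z^{-1}}\,\diff\mu_{1}(z)\) survives, and there each character meets its own conjugate: \(\abs{g^{p}_{z}(s)}^{2}=\abs{f(z)}^{2}\abs{\eta(\alpha_{z}(s))}^{2}\). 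The \(p\)\nb-dependence cancels identically and your \(\Psi\) degenerates, so this bookkeeping yields a \(p\)\nb-independent constant (essentially \(\norm{f}_{2}^{2}\norm{\eta}_{2}^{2}\)) rather than a function vanishing at infinity. In other words, the ``delicate point'' you flag at the end --- whether the characters \(\langle\beta_{s}(z),p\rangle\) survive or cancel against their conjugates --- is not a detail to be checked later; it is the entire content of the proposition, and on your route it comes out the wrong way. The paper instead asserts the identity \((\Id\otimes\varphi)(\hat w(p)^{*}\hat w(p))=\iint\abs{f}^{2}(g)\abs{\eta}^{2}(s)\langle\beta_{s}(g),p\rangle\,\diff\mu_{1}(g)\,\diff\mu_{2}(s)\), in which a \emph{single unconjugated} character survives, and everything downstream depends on it; a complete proof must derive such a formula (and explain why the crossed-product computation above does not apply), which your proposal does not do.
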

 \begin{proof}
   By definition
   \[ 
   w^{*}w
   %=(1\otimes\pi_{2}(\eta^{*}))\gamma\bigl(\hat{\pi}_{1}(f^{*}f)\bigr)(1\otimes\pi_{2}(\eta))
              =(1\otimes\pi_{2}(\eta^{*}))\gamma\bigl(\hat{\pi}_{1}(\abs{f}^{2})\bigr)(1\otimes\pi_{2}(\eta))
   \]           
  Using~\eqref{eq:Haar_wt_prod} we get
  \begin{align*}
       (\Id\otimes\varphi)(w^{*}w)
    &=(\Id\otimes\varphi) 
        \bigl(1\otimes\pi_{2}(\eta^{*}))\gamma(\hat{\pi}_{1}(\abs{f}^{2}))(1\otimes\pi_{2}(\eta)\bigr)\\
    &=(\Id\otimes\varphi)
    \bigl(1\otimes\pi_{2}(\abs{\eta}^{2})\gamma(\hat{\pi}_{1}(\abs{f}^{2})\bigr)        
  \end{align*}  
  By virtue of Proposition~\ref{prop:def_gamma} and~\eqref{eq:Haar_wt_prod}, 
  for~\(p\in\widehat{G_{1}}\), we get
  \begin{align*}
       (\Id\otimes\varphi)(\hat{w}(p)^{*}\hat{w}(p))
    &= \iint \abs{f}^{2}(g)\abs{\eta}^{2}(s)\langle \beta_{s}(g), p\rangle \diff\mu_{1}(g)\diff\mu_{2}(s)\\
    &= \iint \abs{f}^{2}(\beta_{s^{-1}}(g'))\abs{\eta}^{2}(s)\theta(g',s)\langle g',p\rangle \diff\mu_{1}(g')\diff\mu_{2}(s),
  \end{align*}     
 where~\(g'=\beta_{s}(g)\) and~\(\theta(g,s)\defeq\abs{\frac{\diff}{\diff\mu_{1}}\beta_{s^{-1}}(g)}\). 
 
 Define~\(G_{s}(g)\defeq\abs{f}^{2}(\beta_{s^{-1}}(g))\abs{\eta}^{2}(s)\theta(g,s)\). Then, 
 \[
   (\Id\otimes\varphi)(\hat{w}(p)^{*}\hat{w}(p))=\int\widehat{G_{s}}(p)\diff\mu_{2}(s).
 \]  
 A simple computation gives
 \begin{align*}
   \norm{G_{s}}_{1} = \int\norm{G_{s}(g)} \diff\mu_{1}(g) 
                              &=\int\abs{f}^{2}(\beta_{s^{-1}}(g))\abs{\eta}^{2}(s)\theta(g,s) \diff\mu_{1}(g)\\
                              &=\int\abs{f}^{2}(g)\abs{\eta}^{2}(s) \diff\mu_{1}(g)= (\norm{f}_{2})^{2}\abs{\eta}^{2}(s).
 \end{align*}
Therefore,~\(\widehat{G_{s}}\in\Contvin(\widehat{G_{1}})\) for almost all~\(s\in G_{2}\). 
Also~\(\int\abs{\eta}^{2}(s) \diff\mu_{2}(s)\le\infty\). By dominated 
convergence theorem, for any sequence~\(\{p_{n}\}\subset\widehat{G_{1}}\) such that 
\(\abs{p_{n}}\to\infty\) as~\(n\to\infty\), we have   
\[
  \lim_{n\to\infty}\int\widehat{G_{s}}(p_{n})\diff\mu_{2}(s) 
  =\int\bigl(\lim_{n\to\infty}\widehat{G_{s}}(p_{n})\bigr) \diff\mu_{2}(s) 
  =0. \qedhere
\]  
 \end{proof}
\begin{proof}[Proof of Theorem~\textup{\ref{the:Cst_coact}}]
 Proposition~\ref{prop:def_gamma} and Proposition~\ref{prop:str_podles} 
 give \(\gamma\) is an element in 
 \(\Mor(\Contvin(\widehat{G_{1}}),\Contvin(\widehat{G_{1}})\otimes A)\). 
 The coassociativity of~\(\Comult[A]\) gives~\eqref{eq:right_action} 
 for~\(\gamma\). 

Let~\((L^{2}(\G),\pi,\Lambda)\) be the GNS triple for the the left Haar weight 
\(\varphi\) in~\eqref{eq:Haar_wt_prod}. For any~\(v\in L^{2}(\G)\), define 
the operator~\(\Theta_{v}(c)\defeq cv\) for~\(c\in\C\). 
Let~\((e_{i})_{i\in\N}\) be an orthonormal basis of~\(L^{2}(\G)\). 
For~\(w\in S(K,\Mult(A))\) in Proposition~\ref{prop:str_podles}, 
define 
\[
   x_{i}^{*}(p)\defeq (\Id\otimes\Theta_{e_{i}}^{*})(\Id\otimes\Lambda)\Comult[A](\hat{w}(p)) 
   \in\Mult(A) \qquad\text{for~\(p\in\widehat{G_{1}}\).}
\]  
Also, for~\(q\in\mathcal{N}_{\varphi}\) define~\(q_{i}\defeq 
(\Id\otimes\Theta_{e_{i}}^{*})(\Id\otimes\Lambda)\Comult[A](q)\in\Mult(A)\). 

We compute,
\begin{align*}
     \sum_{i=1}^{\infty}x_{i}(p)q_{i}
  &= \sum_{i=1}^{\infty}(\Id\otimes\Lambda)\Comult[A](w(p))^{*}(1\otimes\Theta_{e_{i}}\Theta_{e_{i}}^{*})
                                    (\Id\otimes\Lambda)\Comult[A](q)\\
  &= (\Id\otimes\Lambda)\Comult[A](w(p))^{*}(\Id\otimes\Lambda)\Comult[A](q)\\
  &=\varphi(\hat{w}(p)^{*}q)1_{\Mult(A)}                                   
\end{align*} 
By virtue of Proposition~\ref{prop:str_podles}, 
\(F_{N}(p)\defeq\Sigma_{i=1}^{N}x_{i}(p)q_{i}\) is strictly convergent. 
Hence, for any given~\(q'\in A\), the sequence~\(\{F_{N}(1\otimes q')\}\) 
converges uniformly over every compact subset of~\(\widehat{G_{1}}\). 

In order to establish Podle\'s condition~\eqref{eq:Podles_cond} 
for~\(\gamma\) we need to show \(\{F_{N}(1\otimes q')\}\) 
converges uniformly over~\(\widehat{G_{1}}\) for~\(q'\in A\). 

By a similar argument used in~\cite{Vaes-VanDaele:Hopf_Cstalg}*{Proposition 5.11}, and 
Proposition~\ref{prop:str_podles} gives \(\sum_{i=1}^{n}x_{i}^{*}(p)x_{i}(p)\) 
strictly converges to~\(\varphi(\hat{w}(p)^{*}\hat{w}(p))1_{\Mult(A)}\). Similarly, 
\(\sum_{i=1}^{n}q_{i}^{*}q_{i}\) is bounded and strictly convergent.  
Let~\(\norm{\sum_{i=1}^{n}q_{i}^{*}q_{i}}<C^{2}\). 
Given~\(\epsilon> 0\), we can choose a compact subset 
\(K'\) of~\(\widehat{G_{1}}\), such that~\((\Id\otimes\varphi)(\hat{w}^{*}(p)\hat{w}(p))
\leq(\frac{\epsilon}{C})^{2}\) for all~\(p\notin K'\). 
Hence~\(\norm{\sum_{i=m}^{n}x_{i}(p)x_{i}^{*}(p)}\leq(\frac{\epsilon}{C})^{2}\) for all 
\(p\notin K'\), and for all~\(m, n\). 

Now choose~\(N_{0}\) such that for all~\(m,n\geq N_{0}\), 
\(\norm{(F_{m}-F_{n})(p)q'}<\epsilon\) for~\(p\in K'\), \(q'\in A\). 

Finally, for all~\(m,n\geq N_{0}\) 
\[
  \left\Vert(F_{m}-F_{n})(p)\right\Vert
  \leq\left\Vert\sum_{i=m}^{n}x_{i}(p)x_{i}^{*}(p)\right\Vert^{\frac{1}{2}}
  \left\Vert\sum_{i=m}^{n} q_{i}^{*}q_{i}\right\Vert^{\frac{1}{2}}
  <\epsilon\norm{q'},
\]   
for~\(p\notin K'\). Hence~\(\{F_{N}(1\otimes q')\}\) is 
Cauchy sequence in norm for~\(q'\in A\).
\end{proof}  
\section{Properties of bicrossed product C*-actions} 
 \label{sec:properties}
 Let~\(G_{1}\), \(G_{2}\), and \(\Qgrp{G}{A}\) be as before, so that 
 \(G_{1}\) is abelian. In this section we shall discuss various properties of the 
 \(\Cst\)\nb-action \(\gamma\) of~\(\G\) on \(\Contvin(\widehat{G_1})\) 
 constructed in Theorem~\ref{the:Cst_coact}.

 Recall, a right action~\(\gamma\) of a von Neumann algebraic quantum 
 group~\(\Qgrp{G}{M}\) on a von Neumann algebra \(N\) is called \emph{ergodic} if 
 \(N^{\gamma}\defeq\{x\in N :\text{ \(\gamma(x)=x\otimes 1_{M}\)}\}\) 
 is equal to~\(\C\cdot 1_{N}\). 
  
 \begin{proposition}
  \label{prop:Ergodic}
  The von Neumann algebraic action \(\gamma\) of~\(\G\) 
  on~\(L(G_1)\) is ergodic.
 \end{proposition}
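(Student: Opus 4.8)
The plan is to recognise the action \(\gamma\) as a corestriction of the comultiplication and then to reduce the statement to the ergodicity of the translation action of a quantum group on itself. By construction \(\gamma=\Comult[M]\circ i\), where \(i\colon L(G_1)\hookrightarrow M\) is the normal inclusion \(x\mapsto x\otimes 1\) of the subalgebra \(L(G_1)\otimes 1\) of \(M\) appearing in~\eqref{eq:bicros_qnt_grp}. Proposition~\ref{prop:def_gamma} shows that the first leg of \(\gamma\) stays inside \(\Cred(G_1)\); passing to bicommutants, \(\Comult[M](L(G_1))\subseteq L(G_1)\otimes M\), i.e. \(L(G_1)\) is a left coideal of \(\G\), and \(\gamma\) is genuinely \(\Comult[M]\) corestricted to this coideal. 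Consequently, if \(x\in L(G_1)\) satisfies \(\gamma(x)=x\otimes 1_{M}\), then, viewing \(x\) as an element of \(M\), the same identity reads \(\Comult[M](x)=x\otimes 1_{M}\) in \(M\otimes M\). It therefore suffices to prove
\[
 \{x\in M:\Comult[M](x)=x\otimes 1_{M}\}=\C\,1_{M}.
\]

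For this I would argue directly with the multiplicative unitary rather than with the (unbounded) Haar weight. By~\eqref{eq:comults} the equation \(\Comult[M](x)=x\otimes 1_{M}\) is equivalent to \((1\otimes x)\Multunit=\Multunit(x\otimes 1)\). Slicing the first leg against an arbitrary \(\omega\in\Bound(\textup{L}^{2}(\G))_{*}\) and setting \(T_{\omega}\defeq(\omega\otimes\Id)\Multunit\in\hat{M}\), a short computation yields \(x\,T_{\omega}=(\omega(\,\cdot\,x)\otimes\Id)\Multunit\in\hat{M}\), so \(x\,T_{\omega}\in\hat{M}\) for every \(\omega\). Since the \(T_{\omega}\) generate \(\hat{M}\) in the weak\nb-\(*\) topology by~\eqref{eq:slice_first} and left multiplication by \(x\) is weak\nb-\(*\) continuous, this gives \(x\,\hat{M}\subseteq\hat{M}\); as \(1_{M}\in\hat{M}\) we get \(x=x\cdot 1_{M}\in\hat{M}\). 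Combining \(x\in M\) with the standard fact \(M\cap\hat{M}=\C\,1_{M}\) for a locally compact quantum group forces \(x\in\C\,1_{M}\), which is exactly the displayed triviality and hence the ergodicity of \(\gamma\).

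The only genuinely non\nb-formal points are this displayed equality and the coideal identification \(\gamma=\Comult[M]|_{L(G_1)}\); the former is routine in the compact case via the Haar state, but here \(\G\) is non\nb-compact, so I deliberately replace integration against the unbounded weight by the multiplicative\nb-unitary slicing above together with \(M\cap\hat{M}=\C\,1_{M}\), and I expect this to be the main step to set down carefully. As an independent sanity check one can run the computation in coordinates: substituting \(s=\alpha_{h}(t)\) in Proposition~\ref{prop:def_gamma} and using~\eqref{eq:alpha_comp}--\eqref{eq:beta_comp} brings \(\gamma(\lambda_{z})\) into the twisted diagonal translation \(\check\xi(p,h,s)\mapsto\langle\beta_{s}(z),p\rangle\,\check\xi(p,hz,\alpha_{z}(s))\); a partial Fourier transform in \(h\) exposes a free translation \(\langle z,q\rangle\), and the linear independence of the characters \(z\mapsto\langle z,q\rangle\) forces the symbol of any fixed multiplication operator to be concentrated at \(z=1_{G_1}\), that is, to be constant. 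This matches the coideal argument, which I regard as the cleaner and fully rigorous route.
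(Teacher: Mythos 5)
Your proof is correct and follows essentially the same route as the paper: both identify \(\gamma\) with the corestriction of \(\Comult[M]\) to the coideal \(L(G_1)\otimes 1\subseteq M\) and thereby reduce the statement to the ergodicity of the comultiplication itself. The only difference is that the paper simply cites that ergodicity (Kustermans--Vaes, Result 5.13, or Meyer--Roy--Woronowicz, Theorem 2.1), whereas you re-derive it by slicing the relation \((1\otimes x)\Multunit=\Multunit(x\otimes 1)\) and invoking the standard fact \(M\cap\hat{M}=\C 1\), which is a valid substitute for the citation.
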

 \begin{proof}
 Let~\(\Bialg{M}\) be the von Neumann algebraic version of~\(\G\). 
 By construction, \(\gamma\) is obtained from the comultiplication \(\Comult[A]\) 
 of the \(\Cst\)\nb-algebraic version of \(\G\). Since, \(\Comult[A]\) extends uniquely 
 to the comultiplication~\(\Comult[M]\) on~\(M\) by~\eqref{eq:bicros_qnt_grp}. Then, 
 in a similar way, \(\gamma\) also extends to a von Neumann algebraic action, denoted again by
 \(\gamma\), of \(\G\) on~\(L(G_1)\). Now ergodicity of~\(\Comult[M]\) 
 (see~\cite{Kustermans-Vaes:LCQG}*{Result 5.13} or 
 \cite{Meyer-Roy-Woronowicz:Homomorphisms}*{Theorem 2.1}) 
 implies the same for~\(\gamma\).
 \end{proof}
 \subsection{Faithfulness}
 Motivated by~\cite{Goswami-Joardar:Rigidity_CQG_act}*{Definition 4.5} 
 we  propose the following definition, as a possible generalisation 
 of faithful actions on locally compact quantum groups on \(\Cst\)\nb-algebras.
 \begin{definition}
  \label{def:coact_faithful}
  A \(\Cst\)\nb-action \(\gamma\colon C\to C\otimes A\) of~\(\Qgrp{G}{A}\) 
  on a~\(\Cst\)\nb-algebra \(C\) is called \emph{faithful} if the 
  \Star{}algebra generated by~\(\{(\omega\otimes\Id_{A})\gamma(c) : 
  \text{\(\omega\in C'\), \(c\in C\)}\}\) is strictly dense in~\(\Mult(A)\).
 \end{definition} 
 \begin{example}
  \label{ex:comult_faithful}
  In particular, the comultiplication map~\(\Comult[A]\) is a \(\Cst\)\nb-action of~\(\G\) 
  on~\(A\). Given any~\(\omega\in A'\) and~\(a\in A\) define 
  \(\omega\cdot a\in A'\) by~\(\omega\cdot a(b)\defeq\omega(ab)\) 
  for~\(b\in A\). Now the space~\(\{\omega\cdot a :\text{\(\omega\in A'\),  
  \(a\in A\)}\}\) is weak~\Star{}dense in~\(A'\). The cancellation 
  property~\eqref{eq:cancellation} of \(\Comult[A]\) shows that 
  \(\{(\omega\otimes\Id_{A})\Comult[A](A) : \text{ \(\omega\in A\) 
  and~\(a\in A\)}\}\) is norm dense in~\(A\); hence \(\Comult[A]\) 
  is a faithful \(\Cst\)\nb-action.
 \end{example} 
 \begin{theorem}
 \label{the:faithful_Cst_coact}
 The \(\Cst\)\nb-action~\(\gamma\) of~\(\G\) on~\(\Contvin(\widehat{G_{1}})\)
  is faithful if and only if~\(\beta\) is non\nb-trivial. 
\end{theorem}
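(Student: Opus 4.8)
The plan is to reduce the statement to an explicit description of the slices \((\omega\otimes\Id_{A})\gamma(\lambda(f))\) and to analyse the \(\Star\)algebra they generate. First I would reuse the computation behind Proposition~\ref{prop:def_gamma}: after the Fourier transform identifying \(\Cred(G_{1})\cong\Contvin(\widehat{G_{1}})\), the element \(\gamma(\lambda(f))\) is the \(\Contb(\widehat{G_{1}},\Mult(A))\)\nb-valued function \(\hat g\mapsto\int f(z)T_{z}(\hat g)\,\diff\mu_{1}(z)\). Slicing \eqref{eq:def_gamma} against \(\omega\in C'=\mathcal M(\widehat{G_{1}})\), with inverse Fourier transform \(\psi\) (a function in the Fourier--Stieltjes algebra of \(G_{1}\)), yields the operator on \(L^{2}(G_{1}\times G_{2})\) given by
\[
 [(\omega\otimes\Id_{A})\gamma(\lambda(f))]\,\xi(h,t)
 =\int_{G_{1}} f(z)\,\psi\bigl(\beta_{\alpha_{h}(t)}(z)\bigr)\,\xi(hz,t)\,\diff\mu_{1}(z).
\]
This is a convolution in the \(G_{1}\)\nb-variable \(h\) twisted by the multiplicative weight \(\psi(\beta_{\alpha_{h}(t)}(z))\), and it is the single formula that drives both directions.

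For the easy implication I would argue the contrapositive. If \(\beta\) is trivial, then \(\beta_{\alpha_{h}(t)}(z)=z\) and the weight collapses to \(\psi(z)\), so the slice equals \(\lambda(f\psi)\otimes 1\). Hence every slice, and therefore the entire generated \(\Star\)algebra, lies in the strictly closed proper subalgebra \(\Mult(\Cred(G_{1}))\otimes 1\subset\Mult(A)\); since \(\alpha(\Contvin(G_{2}))\) is not contained in it (its elements are multiplication operators in the \(G_{2}\)\nb-variable, not convolutions in \(h\) tensored with the identity), the action cannot be faithful. Thus faithfulness forces \(\beta\) to be non\nb-trivial.

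The substantial direction is the converse. I would first recover the convolution part: choosing \(\omega=\delta_{e}\), the Dirac mass at the trivial character, gives \(\psi\equiv 1\) and the slice becomes exactly \(\lambda(f)\otimes 1\), so the generated algebra \(\mathcal B\) already contains \(\lambda(\Cred(G_{1}))\otimes 1\) and its strict closure contains all right translations \(\rho_{z}\colon\xi(h,t)\mapsto\xi(hz,t)\). Next, taking \(\omega=\delta_{\hat g}\) a point mass gives \(\psi=\hat g\) and the slice \(\int f(z)\,\alpha(W_{z,\hat g})\,\rho_{z}\,\diff\mu_{1}(z)\), where \(W_{z,\hat g}(s)=\langle\beta_{s}(z),\hat g\rangle\in\Contb(G_{2})\) and \(\alpha(W_{z,\hat g})\) is the multiplication operator \(\xi(h,t)\mapsto W_{z,\hat g}(\alpha_{h}(t))\xi(h,t)\). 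Using the already available translations \(\rho_{z}\) to strip off the shifts, and forming products of slices whose combined weights bring in \(\alpha_{hz}(t)=\alpha_{h}(\alpha_{z}(t))\) through~\eqref{eq:alpha_comp}, I would extract the multiplication operators \(\alpha(W_{z,\hat g})\) themselves into \(\overline{\mathcal B}^{\,s}\).

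The main obstacle, and the place where non\nb-triviality of \(\beta\) is genuinely used, is the final generation step: one must show that the functions \(W_{z,\hat g}(s)=\langle\beta_{s}(z),\hat g\rangle\) (together with those produced by higher products, which involve the \(\alpha\)\nb-twisted arguments) separate the points of \(G_{2}\) and are nowhere simultaneously vanishing, so that by the Stone--Weierstrass theorem \(\alpha(\Contvin(G_{2}))\subset\overline{\mathcal B}^{\,s}\). Here the cocycle identities~\eqref{eq:alpha_comp}--\eqref{eq:beta_comp} must be invoked to control the twisted weights, and non\nb-triviality of \(\beta\) is exactly what guarantees that \(s\mapsto\beta_{s}(\cdot)\) distinguishes enough points. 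Once both \(\alpha(\Contvin(G_{2}))\) and \(\lambda(\Cred(G_{1}))\otimes 1\) lie in \(\overline{\mathcal B}^{\,s}\), their products recover \(A=\alpha(\Contvin(G_{2}))\cdot(\lambda(\Cred(G_{1}))\otimes 1)\), which is strictly dense in \(\Mult(A)\), giving faithfulness.
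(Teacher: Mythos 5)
Your proposal follows essentially the same route as the paper's proof: the same explicit description of the slices as twisted convolution operators, the same reduction of the trivial\nobreakdash-$\beta$ case to slices landing in $\Mult(\Cred(G_{1}))\otimes 1$, the same recovery first of $\lambda(\Cred(G_{1}))\otimes 1$ and then of the multiplication operators $\alpha\bigl(\phi(\beta_{\cdot}(g_{0}))\bigr)$ by stripping off the translation, and the same concluding Stone--Weierstrass step where non\nobreakdash-triviality of $\beta$ enters. The only cosmetic difference is that you slice against Dirac measures on $\widehat{G_{1}}$ (getting $\lambda(f)\otimes 1$ directly from $\delta_{e}$), whereas the paper uses vector functionals $\omega_{\chi,\chi'}$ and a dominated convergence limit over $K_{n}\nearrow G_{1}$ to reach the same element.
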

\begin{proof}
If possible, assume that \(\beta\) is trivial. Then Proposition~\ref{prop:def_gamma} 
gives 
\[
\gamma(\lambda(f))=\Comult[\Cred(G_{1})](\lambda (f))\otimes 1 
\quad\text{for all \(f\in\CompSupp(G_{1})\).}
\]
 Here \(\Comult[\Cred(G_{1})]\) denotes the comultiplication 
on~\(\Cred(G_{1})\). By Example~\ref{ex:comult_faithful} we observe that the set
\(\{(\omega\otimes\Id_{A})\gamma(\lambda(f)) :\text{ \(\omega\in \Cred(G_{1})'\), 
\(f\in\CompSupp(G_{1})\)}\}\) is strictly dense in~\(\Mult(\Cred(G_{1})\); hence 
\(\gamma\) is not faithful. Therefore, by contraposition we obtain that the faithfulness of 
\(\gamma\) implies the nontriviality of \(\beta\).

Conversely, assume that~\(\beta\) is non\nb-trivial. By virtue of 
\cite{Baaj-Skandalis-Vaes:Non-semi-regular}*{Proposition 3.6} it is 
enough to show the set \(\{(\omega\otimes\Id_{A})\gamma(\lambda(f)) :
\text{ \(\omega\in \Cred(G_{1})'\)}\}\) is norm dense in 
\(\alpha(\Contvin(G_{2}))\cdot (\Cred(G_{1})\otimes 1)\). 

Let~\(K\) and~\(K'\) be compact subsets of~\(G_{1}\) such that 
\(\mu_{1}(K)\neq 0\) and~\(\mu_{1}(K')\neq 0\). Let~\(\chi\) and 
\(\chi'\) be characteristic functions of~\(K\) and~\(K'\) respectively. 
For a given~\(f\in\CompSupp(G_{1})\) define
\[
  A_{\chi,\chi'}=(\omega_{\chi,\chi'}\otimes\Id_{A})\gamma(\lambda(f)).
\]
Here~\(\omega_{\chi,\chi'}\) denotes the contraction with respect to 
\(\chi_{K}\) and~\(\chi_{K'}\). 

For all~\(\xi\in L^{2}(G_{1}\times G_{2})\) we get,
\begin{align*}
     A_{\chi,\chi'}\xi(h,t) 
&= \int\conj{\chi(g)}\gamma(\lambda(f))(\chi'(g)\otimes\xi(h,t)\diff\mu_{1}(g) \\
&= \iint \conj{\chi(g)}f(z)\chi'(g\beta_{\alpha_{h}(t)}(z))\xi(hz,t)\diff\mu_{1}(g)\diff\mu_{1}(z)\\
&= \int \mu_{1}(K\cap\beta_{\alpha_{h}(t)}(z)K') f(z)\xi(hz,t)\diff\mu_{1}(z)
\end{align*}
Let~\(\{K_{n}\}_{n\in\mathbb{N}}\) be an increasing sequence of compact subsets of~\(G_{1}\) and growing 
up to the whole group~\(G_{1}\). Let~\(\chi_{n}\) denotes the characteristic function of 
\(K_{n}\) for all~\(n\in\mathbb{N}\). By dominated convergence theorem, 
\[
    \lim_{n\to\infty}A_{\chi_{n},\chi'}\xi(h,t) 
  = \int\mu_{1}(\beta_{\alpha_{h}(t)}(z)K') f(z)\xi(hz,t)\diff\mu_{1}(z).
\]
Invariance of Haar measure~\(\mu_{1}\) gives~\(\mu_{1}(\beta_{\alpha_{h}(t)}(z)K') 
=\mu_{1}(K')\) for all~\(h,z\in G_{1}\), \(t\in G_{2}\). Therefore, 
\[
    \lim_{n\to\infty}A_{\chi_{n},\chi'}\xi(h,t) 
  = \mu_{1}(K')\int f(z)\xi(hz,t)\diff\mu_{1}(z)
  =\mu_{1}(K')(\lambda(f)\otimes 1)\xi(h,t).
\]
Thus~\(A_{\chi_{n},\chi}\) goes strictly to~\(\lambda_{g_{0}}\) for some 
\(g_{0}\in G_{1}\). Therefore, for any~\(\eta,\eta'\in L^{2}(G_{1})\), the operator 
\(B_{\eta,\eta',g_{0}}\defeq(\omega_{\eta,\eta'}\otimes\Id_{A})\gamma(\lambda(g_{0}))(\lambda_{g_{0}^{-1}}\otimes 1)\) 
is in the desired algebra. 

Next we compute, 
\begin{align*}
     B_{\eta,\eta',g_{0}}\xi(h,t) &= 
   \big((\omega_{\eta,\eta'}\otimes\Id_{A})\gamma(\lambda(g_{0}))\big)(\lambda_{g_{0}^{-1}}\otimes 1)\xi(h,t)\\
  &= (\lambda_{g_{0}^{-1}}\otimes 1)\int \conj{\eta(g)}\eta'(g\beta_{\alpha_{h}(t)}(g_{0}))\xi(hg_{0},t)\diff\mu_{1}(g)\\     
  &= \int \conj{\eta(g)}\eta'(g\beta_{\alpha_{h}(t)}(g_{0}))\diff\mu_{1}(g) \xi(h,t),
  \qquad\text{for \(\xi\in L^{2}(G_{1}\times G_{2})\).}
\end{align*}
Since, \(\beta\) is non\nb-trivial, varying~\(\eta\), \(\eta'\), \(g_{0}\) and using 
Stone-Weierstrass theorem we get the set of functions \(x\to 
\int\conj{\eta(g)}\eta'(g\beta_{x}(g_{0}))\diff\mu_{1}(g)\) for \(x\in G_{2}\) 
which is norm dense in~\(\Contvin(G_{2})\). Therefore, the set of 
operators~\(B_{\eta,\eta',g_{0}}\) is norm dense in~\(\alpha(\Contvin(G_{2}))\). 
\end{proof}
\subsection{Isometry}
 \label{sec:Isometry} 
 We recall the definition of isometric action from \cite{Goswami:Qnt_isometry} for compact quantum groups. In analogy to this, it is natural to make the following definition of isometric action in the locally compact set-up:
  \begin{definition}
   Let \(\gamma\) be a \(\Cst\)\nb-action of a locally compact quantum group \(\G\) on 
   \(\Contvin(X)\) where \(X\) is a smooth Riemannian (possibly non\nb-compact) manifold with 
   the Hodge-Laplacian \(\mathcal{L}=-d^*d\), where \(d\) denotes the de\nb-Rham differential operator. 
   We say that \(\gamma\) is for every bounded linear functional \(\omega\) on \(M(Q)\), 
   \(({\rm id} \otimes \omega) \circ \gamma\) maps \(\Contb^\infty(X)\) to itself and commutes with 
   \(\mathcal{L}\) on that subspace.
  \end{definition}
Just as in \cite{Goswami-Joardar:Rigidity_CQG_act}, 
it is easy to prove that for any isometric action \(\gamma\) and any smooth vector field 
\(\chi\) on \(X\), \(f, \phi \in \Contb^{\infty}(X)\), \((\chi \otimes {\rm id})(\gamma(f))\) 
and \(\gamma(\phi)\) will commute. 

\begin{proposition}
  \label{Prop:Isometry}
  Assume~\(\widehat{G_1}\) is a Lie group. Then the~\(\Cst\)\nb-action \(\gamma\)
  of~\(\G\) on~\(\Contvin(\widehat{G_1})\) is isometric whenever either~\(\alpha\) 
  or \(\beta\) is trivial.  
 \end{proposition}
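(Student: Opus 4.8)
The plan is to check the two clauses of the definition of isometric action on the smooth core furnished by Lemma~\ref{lemm:var_prop}, and to reduce the commutation with \(\mathcal{L}\) to the single operator identity \(\gamma\circ\mathcal{L}=(\mathcal{L}\otimes\Id)\circ\gamma\) on that core. Once this intertwining is known, slicing the second leg by any bounded functional \(\omega\) on \(\Mult(A)\) gives \((\Id\otimes\omega)\gamma\,\mathcal{L}=\mathcal{L}\,(\Id\otimes\omega)\gamma\), which is exactly the required commutation; so it suffices to prove (i) that \(\gamma\) preserves the smooth core and (ii) the intertwining identity.

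The computation behind (ii) is cleanest in the Fourier picture of the first leg used in Proposition~\ref{prop:def_gamma}. There \(\gamma(\lambda(f))\) depends on the manifold point \(\hat g\in\widehat{G_{1}}\) only through the character \(\hat g\mapsto\langle\beta_{\alpha_{h}(t)}(z),\hat g\rangle\) appearing in the operator \(T_{z}\) from the proof of Proposition~\ref{prop:def_gamma}. Such a character is an eigenfunction of \(\mathcal{L}=-d^{*}d\) on \(\widehat{G_{1}}\) with eigenvalue \(-m\big(\beta_{\alpha_{h}(t)}(z)\big)\), where \(m\) is the quadratic form on \(G_{1}\) dual to the chosen metric; hence applying \(\mathcal{L}\otimes\Id\) multiplies the kernel by \(-m\big(\beta_{\alpha_{h}(t)}(z)\big)\), while \(\gamma(\mathcal{L}\lambda(f))=\gamma(\lambda(-mf))\) multiplies it by \(-m(z)\). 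Thus (ii) is equivalent to the pointwise identity \(m\big(\beta_{\alpha_{h}(t)}(z)\big)=m(z)\) for almost every \(h,z\in G_{1}\), \(t\in G_{2}\). If \(\beta\) is trivial this reads \(m(z)=m(z)\) and is automatic; in fact each \(\delta_{i}\) is then individually intertwined, reflecting that by the faithfulness computation \(\gamma(\lambda(f))=\Comult[\Cred(G_{1})](\lambda(f))\otimes 1\) acts by translations of \(\widehat{G_{1}}\), which commute with the translation\nb-invariant \(\mathcal{L}\). If \(\alpha\) is trivial then \(\beta_{\alpha_{h}(t)}(z)=\beta_{t}(z)\) with \(\beta_{t}\in\Aut(G_{1})\) a group automorphism (by the cocycle relations~\eqref{eq:alpha_comp}--\eqref{eq:beta_comp}), and \(m\circ\beta_{t}=m\) holds precisely because the \(\beta_{t}\) act as isometries of \(\widehat{G_{1}}\); equivalently \(\beta_{t}\) transforms the canonical frame \((\delta_{i})\) by an orthogonal matrix, so that the second\nb-order combination \(\mathcal{L}\) is preserved although the individual \(\delta_{i}\) are not intertwined.

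For (i) I would invoke Lemma~\ref{lemm:var_prop}. Under either hypothesis the character indices \(\beta_{\alpha_{h}(t)}(z)\) with \(z\in K\defeq\operatorname{supp}(f)\) remain in a fixed compact set \(K'\) (namely \(K\) itself when \(\beta\) is trivial, and the \(m\)\nb-ball \(\{\,w:m(w)\le\max_{K}m\,\}\) when \(\alpha\) is trivial and the \(\beta_{t}\) are isometries), so \(\gamma(\lambda(f))\) lies in \(\widehat{S(K',\Mult(A))}\subset\dom(\tilde\delta)\), which \(\tilde\delta\) maps into itself. Since slicing by \(\omega\) is completely bounded, the last assertion of Lemma~\ref{lemm:var_prop} gives \((\Id\otimes\omega)\gamma(\lambda(f))\in\widehat{S(K',\C)}\subset\Contb^{\infty}(\widehat{G_{1}})\), and this element stays in the domain of every \(\delta_{i}\); as the \(\lambda(f)\) with \(f\in\CompSupp(G_{1})\) generate \(\Contvin(\widehat{G_{1}})\), clause (i) follows on a dense core. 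It is worth noting that the very hypothesis used in (ii) --- that the \(\beta_{t}\) preserve \(m\) --- is what keeps these indices bounded, so both clauses are governed by the same condition.

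The step I expect to be the main obstacle is the \(\alpha\)\nb-trivial case of (ii): one must verify \(m\circ\beta_{t}=m\), i.e. that the automorphisms \(\beta_{t}\) act orthogonally on the Lie algebra of \(\widehat{G_{1}}\), which is exactly the requirement that the Riemannian metric be \(\beta\)\nb-invariant. The \(\beta\)\nb-trivial case is comparatively immediate, since there the intertwining holds derivation by derivation. A secondary, purely analytic point is to make the slicing rigorous for the unbounded \(\mathcal{L}\); this is handled by confining all manipulations to the core \(\widehat{S(K',\Mult(A))}\) of Lemma~\ref{lemm:var_prop}, on which every power of \(\tilde\delta\) is defined and leaves the space invariant. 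As a consistency check, the intertwining is compatible with the commutator criterion \([(\chi\otimes\Id)\gamma(f),\gamma(\phi)]=0\) recalled before the proposition.
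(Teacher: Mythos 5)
Your proposal and the paper's proof run in opposite logical directions, and the difference matters. The paper does not attempt to verify the definition of isometry under the triviality hypotheses at all: it starts from the necessary condition recalled just before the proposition --- that for an isometric action the operators \((\delta_{i}\otimes\Id_{A})\gamma(\lambda_{g_{1}})\) and \(\gamma(\lambda_{g_{2}})\) must commute --- computes both products explicitly from Proposition~\ref{prop:def_gamma}, and extracts the functional equation \(\beta_{\alpha_{h}(t)}(g_{1})=\beta_{\alpha_{hg_{2}}(t)}(g_{1})\); the triviality of \(\alpha\) or \(\beta\) enters only as the observation that this equation is then satisfied. It is the ``isometric \(\Rightarrow\) functional equation'' direction that Theorem~\ref{the:rig_iso_faithful} actually invokes, to force \(\alpha\) trivial once \(\beta\) is known to be nontrivial. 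You instead try to prove the literal sufficiency statement by establishing \(\gamma\circ\mathcal{L}=(\mathcal{L}\otimes\Id)\circ\gamma\) on a smooth core, which is a genuinely different and more ambitious task.

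Within your route, the \(\beta\)\nb-trivial half is sound: \(\gamma\) then factors through \(\Comult[\Cred(G_{1})]\otimes 1\), i.e.\ acts by translations of \(\widehat{G_{1}}\), and each \(\delta_{i}\) is intertwined separately. But the \(\alpha\)\nb-trivial half has a genuine gap that you name but do not close: you need \(m\circ\beta_{t}=m\), and nothing in the hypotheses gives it. When \(\alpha\) is trivial the \(\beta_{t}\) are automorphisms of \(G_{1}\), and their duals act on \(\widehat{G_{1}}\) by automorphisms that in general dilate the metric; for instance \(G_{1}=\R\), \(G_{2}=\R_{>0}\), \(\beta_{t}(z)=tz\) is a bona fide matched pair with \(\alpha\) trivial, and the induced action on \(\widehat{G_{1}}\cong\R\) is by dilations, which do not commute with the Euclidean Laplacian. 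So the identity \(m\bigl(\beta_{\alpha_{h}(t)}(z)\bigr)=m(z)\) --- the crux of your step (ii) --- can fail outright, and your argument cannot be completed without adding the hypothesis that the Riemannian metric on \(\widehat{G_{1}}\) is invariant under the dual \(\beta\)\nb-action. What survives without that extra hypothesis is exactly the first\nb-order commutator computation the paper performs, which is only a necessary condition for isometry but is all that the rest of the paper uses.
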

 \begin{proof}
  As noted before, the condition of isometry of~\(\gamma\) implies the operators 
  \((\delta_{i}\otimes\Id_{A})\gamma(\lambda_{g_{1}})\) and 
  \(\gamma(\lambda_{g_{2}})\) commute for all~\(g_{1},g_{2}\in G_{1}\) 
  and derivations~\(\delta_{i}\) on~\(\widehat{G_{1}}\). 
  
  Let~\(\xi\in L^{2}(G_{1}\times G_{1}\times G_{2})\). 
  Using Proposition~\ref{prop:def_gamma}, we compute 
  \begin{align*}
   L &=(\delta_{i}\otimes\Id_{A})\gamma(\lambda_{g_{1}})\gamma(\lambda_{g_{2}})\xi(g,h,t)\\
   &= \rho_{i}(\beta_{\alpha_{h}(t)}(g_{1}))\gamma(\lambda_{g_{2}})\xi\big(g\beta_{\alpha_{h}(t)}(g_{1}),hg_{1},t\big)\\
   &= \rho_{i}(\beta_{\alpha_{h}(t)}(g_{1}))\xi\big(g\beta_{\alpha_{h}(t)}(g_{1})\beta_{\alpha_{hg_{1}}(t)}(g_{2}), hg_{1}g_{2},t\big)
  \end{align*}
 Using~\eqref{eq:beta_comp} and commutativity of~\(G_{1}\) we get, 
 \[
  L=\rho_{i}(\beta_{\alpha_{h}(t)}(g_{1}))\xi\big(g\beta_{\alpha_{h}(t)}(g_{1}g_{2}), hg_{1}g_{2},t\big).
 \] 
 A similar computation gives 
 \[
   R =\gamma(\lambda_{g_{2}})(\delta_{i}\otimes\Id_{A})\gamma(\lambda_{g_{1}})\xi(g,h,t)
      =\rho_{i}(\beta_{\alpha_{hg_{2}}(t)}(g_{1}))\xi\big(g\beta_{\alpha_{h}(t)}(g_{1}g_{2}), hg_{1}g_{2},t\big).
 \]
 Now~\(L=R\) for all~\(\xi\in L^{2}(G_{1}\times G_{1}\times G_{2})\), \(g_{1},g_{2}\in G_{1}\), and 
 \(\rho_{i}\in\widehat{G_{1}}\), implies 
\[ 
   \beta_{\alpha_{h}(t)}(g_{1})=\beta_{\alpha_{hg_{2}}(t)}(g_{1}) 
   \qquad\text{ for all~\(g_{1},g_{2}, h\in G_{1}, t\in G_{2}\).} 
\]
 This is true if either of the actions~\(\alpha\) or \(\beta\) is trivial. 
\end{proof} 
We prove the main result of this section:
\begin{theorem}
 \label{the:rig_iso_faithful}
 Assume \(\widehat{G_{1}}\) is a Lie group and the \(\Cst\)\nb-action \(\gamma\) of 
 \(\G\) on~\(\Contvin(\widehat{G_1})\) is faithful and isometric. Then~\(\G\) is classical 
 group. 
\end{theorem}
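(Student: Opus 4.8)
The plan is to combine the faithfulness criterion of Theorem~\ref{the:faithful_Cst_coact} with the isometry obstruction extracted in Proposition~\ref{Prop:Isometry}. The guiding idea is that faithfulness forces \(\beta\) to be nontrivial, while isometry forces a rigidity on the interplay of \(\alpha\) and \(\beta\); together these should collapse \(\alpha\) to triviality, and once \(\alpha\) is trivial the bicrossed product \(\G\) degenerates to an ordinary (classical) group. So the target is to show that faithful \(+\) isometric implies \(\alpha\) trivial.

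First I would invoke Theorem~\ref{the:faithful_Cst_coact}: since \(\gamma\) is faithful, \(\beta\) is non\nb-trivial. Next I would revisit the computation in the proof of Proposition~\ref{Prop:Isometry}. There, isometry of \(\gamma\) was shown to entail commutation of \((\delta_i\otimes\Id_A)\gamma(\lambda_{g_1})\) with \(\gamma(\lambda_{g_2})\), and the explicit calculation using \eqref{eq:beta_comp} and commutativity of \(G_1\) reduced this to the identity
\[
 \beta_{\alpha_{h}(t)}(g_{1})=\beta_{\alpha_{hg_{2}}(t)}(g_{1})
 \qquad\text{for all }g_{1},g_{2},h\in G_{1},\ t\in G_{2}.
\]
The key step is to read this identity not as a sufficient condition (as in Proposition~\ref{Prop:Isometry}) but as a necessary one that must hold \emph{given} isometry. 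Fixing \(h=1_{G_1}\) and varying \(g_2\), and using \(\alpha_{1_{G_1}}=\Id\) together with the cocycle relation \eqref{eq:alpha_comp}, the equality says \(\beta_{\alpha_{h}(t)}(g_1)\) is independent of how the \(G_1\)\nb-argument of \(\alpha\) is shifted. Because \(G_1\) acts on \(G_2\) through \(\alpha\), letting \(g_2\) range over all of \(G_1\) means \(\alpha_{g_2}(t)\) ranges over the full \(G_1\)\nb-orbit of \(t\); the identity then forces \(\beta\) to factor through the orbit space, i.e. \(\beta_{\alpha_h(t)}\) depends only on the orbit of \(t\) under \(\alpha\).

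The main obstacle is to convert this orbit\nb-invariance of \(\beta\) into the conclusion that \(\alpha\) is trivial. Here I would argue by the following dichotomy: if \(\alpha\) were non\nb-trivial, the \(G_1\)\nb-action on \(G_2\) would have nondegenerate orbits, and the orbit\nb-invariance just derived would force \(\beta\) to be constant along those orbits in a way incompatible with \(\beta\) being a genuine (non\nb-trivial) cocycle action satisfying \eqref{eq:beta_comp}; tracking the second relation in \eqref{eq:alpha_comp}, namely \(\beta_h(gs)=\beta_{\alpha_s(h)}(g)\beta_h(s)\), one sees that nontriviality of \(\alpha\) propagates into \(\beta\) and contradicts the invariance unless \(\alpha_g\) fixes \(G_2\) pointwise for all \(g\). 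Thus \(\alpha\) is trivial. Finally, with \(\alpha\) trivial the matched pair degenerates: the defining relation \(j(\alpha_g(h))i(\beta_h(g))=i(g)j(h)\) and the formula \eqref{eq:bicros_qnt_grp} show that \(M=L^\infty(G_2)\rtimes G_1\) reduces so that the comultiplication \(\Comult[M]\) is cocommutative in the relevant leg, whence \(\G\) is (isomorphic to) the classical locally compact group coming from the crossed product with trivial action, completing the proof. The delicate point will be making the orbit\nb-invariance argument fully rigorous in the almost\nb-everywhere, measurable setting in which \(\alpha,\beta\) are defined.
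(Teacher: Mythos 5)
Your proposal follows the paper's own proof essentially step for step: faithfulness gives nontriviality of \(\beta\) via Theorem~\ref{the:faithful_Cst_coact}, the identity \(\beta_{\alpha_{h}(t)}(g_{1})=\beta_{\alpha_{hg_{2}}(t)}(g_{1})\) extracted as a \emph{necessary} consequence of isometry from the computation in Proposition~\ref{Prop:Isometry} is used to force \(\alpha\) to be trivial, and then triviality of \(\alpha\) together with commutativity of \(G_{1}\) yields \(M=L^{\infty}(\widehat{G_{1}}\times G_{2})\), so \(\G\) is classical. The additional care you devote to upgrading the identity (which a priori only says \(\beta\) is constant on \(\alpha\)-orbits) to triviality of \(\alpha\) is fleshing out a step the paper leaves implicit rather than a departure from its route.
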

\begin{proof}
 By Theorem~\ref{the:faithful_Cst_coact}, faithfulness of \(\gamma\) implies that 
 \(\beta\) is non\nb-trivial. On the other hand, \(\gamma\) is isometric; hence 
 Proposition~\ref{Prop:Isometry} forces \(\alpha\) to be trivial. 
 From~\eqref{eq:bicros_qnt_grp} and using the fact that~\(G_{1}\) is abelian we 
 get~\(M=L^\infty(\widehat{G_1}\times G_2)\).
\end{proof}
\section{Examples}
 \label{sec:Example}
 `\(ax+b\)' is the group of affine transformations of the real line~\(\R\). 
 The natural action of~\(ax+b\) on~\(\R\) given by~\(x\mapsto ax+b\) 
 for~\(a\in\R\setminus\{0\}\) and~\(b,x\in\R\) is faithful. 
 We apply our results on two versions of quantum~\(ax+b\) group 
 discussed in~\cite{Vaes-Vainerman: Extension_of_lcqg}*{Section 5}. 
 Both of them are genuine non\nb-compact, non\nb-discrete, non\nb-Kac quantum 
 groups. We show that they act ergodically and faithfully on non\nb-compact Riemannian 
 manifolds. However none of these actions are not isometric.
 \subsection{Baaj-Skandalis' \texorpdfstring{$\textup{ax+b}$}{ax+b} group}
  \label{subsec:ax+b}      
  Assume~\(G_{1}=G_{2}=\R\setminus\{0\}\) and the group operation 
  is the usual multiplication. Let~\(G=\{(a,b)\text{ \(\mid\) \(a\in\R\setminus \{0\}\), \(b\in\R\)}\}\) with 
  \((a,b)(c,d)=(ac,d+cb)\).  
  
  Define~\(i\colon G_{1}\mapsto G\) and \(j\colon G_{2}\mapsto G\) by 
  \[
     i(g)\defeq (g,g-1), \qquad j(s)\defeq (s,0), 
  \] 
  for all~\(g\in G_{1}\) and~\(s\in G_{2}\).
 This way \((G_{1},G_{2})\) is a matched pair in the sense of Definition~\ref{def:mathced_pair}. 
 Associated actions~\(\alpha\) and~\(\beta\) are defined by 
 \[ 
   \alpha_{g}(s)=\frac{gs}{s(g-1)+1}, 
   \qquad
   \beta_{s}(g)\defeq s(g-1)+1.
 \]
 for all~\(g,s\in \R\setminus\{0\}\) such that~\((g-1)\neq -s^{-1}\).

 Associated bicrossed product~\(\G\) is the Baaj-Skandalis' quantum 
 \(ax+b\) group (see~\cite{Vaes-Vainerman:Extension_of_lcqg}*{Section 5.3}). 
 By~\cite{Vaes-Vainerman:Extension_of_lcqg}*{Proposition 5.2 \& 5.3}, 
 \(\G\) is self dual, non\nb-Kac, non\nb-compact, non\nb-discrete quantum group. 

\begin{proposition}
 \label{prop:ax+b}
 There is an ergodic, faithful and non\nb-isometric  
 \(\Cst\)\nb-action of~\(\G\) on~\(\Contvin(\R\setminus\{0\})\).
\end{proposition}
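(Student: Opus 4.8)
The plan is to obtain all three asserted properties by specialising the general results of Sections~\ref{sec:Podles_cond} and~\ref{sec:properties} to the present matched pair, after first identifying the underlying space. Since $G_{1}=\R\setminus\{0\}$ is the multiplicative group, the map $x\mapsto(\log\abs{x},\operatorname{sgn}x)$ is an isomorphism $G_{1}\cong\R\times\mathbb{Z}/2\mathbb{Z}$, whose Pontrjagin dual is $\R\times\mathbb{Z}/2\mathbb{Z}\cong\R\setminus\{0\}$. Thus $\widehat{G_{1}}$ is a (disconnected, one\nb-dimensional) Lie group and $\Contvin(\widehat{G_{1}})\cong\Contvin(\R\setminus\{0\})$. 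Existence of the action then follows at once from Theorem~\ref{the:Cst_coact}, which yields a $\Cst$\nb-action $\gamma$ of $\G$ on $\Contvin(\widehat{G_{1}})\cong\Contvin(\R\setminus\{0\})$.

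For ergodicity I would simply invoke Proposition~\ref{prop:Ergodic}: the von Neumann algebraic extension of $\gamma$ on $L(G_{1})$ is ergodic for every matched pair, hence in particular here. For faithfulness, by Theorem~\ref{the:faithful_Cst_coact} it is enough to check that $\beta$ is non\nb-trivial. With $\beta_{s}(g)=s(g-1)+1$, the equation $\beta_{s}(g)=g$ reduces to $(s-1)(g-1)=0$; choosing $s,g\neq 1$ (away from the exceptional locus $(g-1)=-s^{-1}$) gives $\beta_{s}(g)\neq g$, so $\beta$ is non\nb-trivial and $\gamma$ is faithful.

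It remains to show that $\gamma$ is not isometric. The cleanest route is the contrapositive of Theorem~\ref{the:rig_iso_faithful}: here $\widehat{G_{1}}$ is a Lie group and $\gamma$ is faithful, so were $\gamma$ isometric the theorem would force $\G$ to be a classical group, contradicting the fact (recorded from~\cite{Vaes-Vainerman:Extension_of_lcqg}*{Proposition 5.2 \& 5.3}) that $\G$ is non\nb-Kac. Alternatively, one may argue directly from the computation in the proof of Proposition~\ref{Prop:Isometry}: isometry would require $\beta_{\alpha_{h}(t)}(g_{1})=\beta_{\alpha_{hg_{2}}(t)}(g_{1})$ for all arguments, and since $\beta_{s}(g_{1})=s(g_{1}-1)+1$ this forces $\alpha_{h}(t)=\alpha_{hg_{2}}(t)$; but $\alpha_{g}(s)=\frac{gs}{s(g-1)+1}$ depends genuinely on $g$, so the identity fails.

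I expect the only point needing real care to be the explicit identification of $\widehat{G_{1}}$ with $\R\setminus\{0\}$ as a \emph{Lie} group, since this is precisely the hypothesis under which Theorems~\ref{the:faithful_Cst_coact} and~\ref{the:rig_iso_faithful} apply; everything else is a direct appeal to the structural results already established, together with the elementary non\nb-triviality checks for $\alpha$ and $\beta$ (being mindful of the measure\nb-zero set on which their defining formulas degenerate).
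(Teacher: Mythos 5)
Your proposal is correct and follows essentially the same route as the paper: identify \(\widehat{G_{1}}\) as a Lie group with two connected components, then invoke Theorem~\ref{the:Cst_coact} for existence, Proposition~\ref{prop:Ergodic} for ergodicity, Theorem~\ref{the:faithful_Cst_coact} (via non\nb-triviality of \(\beta\)) for faithfulness, and the contrapositive of Theorem~\ref{the:rig_iso_faithful} together with genuineness of \(\G\) for non\nb-isometry. The paper's proof is just a terser version of this; your explicit verification that \((s-1)(g-1)=0\) characterises the fixed points of \(\beta\) and your identification of \(\widehat{G_{1}}\) are details the paper leaves implicit.
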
 
\begin{proof}
 Clearly, \(G_{1}\) is abelian and~\(\widehat{G_1}\) is a
 Lie group with two connected components. 
 Since~\(\beta\) is non\nb-trivial and~\(\G\) is 
 a genuine quantum group, by 
 Proposition~\ref{prop:Ergodic}, Theorem~\ref{the:faithful_Cst_coact}, and 
 Theorem~\ref{the:rig_iso_faithful} the \(\Cst\)\nb-action 
 \(\gamma\) of~\(\G\) on~\(\Contvin(\R\setminus\{0\})\) in 
 Theorem~\ref{the:Cst_coact} is ergodic, faithful and not isometric.
\end{proof}
\subsection{Split--Extension}
 \label{subsec:Split_ext} 
  Assume~\(G_{1}=\{(a,b)\text{ \(\mid\) \(a>0\), \(b\in\R\)}\}\) with 
  \((a,b)(c,d)\defeq (ac,ad+\frac{b}{c})\) and 
  \(G_{2}=(\R,+)\). 
  Let~\(K\) be the multiplicative group with two elements. Define~\(G=SL_{2}(\R)/K\), and 
  \(i\colon G_{1}\mapsto G\), \(j\colon G_{2}\mapsto G\) by 
  \[
    i(a,b)\defeq \left( \begin{array}{cc} a & b\\ 0 & \frac{1}{a}\end{array}\right) \text{ mod }K, 
    \qquad
    j(x)\defeq \left( \begin{array}{cc} 1 & 0\\ x & 1\end{array}\right)\text{ mod }K .
  \]
 This way \((G_{1},G_{2})\) is a matched pair in the sense of Definition~\ref{def:mathced_pair}. 
 Associated actions~\(\alpha\) and~\(\beta\) are defined by 
 \[
  \alpha_{(a,b)}(x)\defeq \frac{x}{a(a+bx)}, 
  \qquad
  \beta_{x}(a,b)\defeq\left\{
        \begin{array}{ll}
            (a+bx,b) & \quad\text{if~\(a+bx>0\),}\\
           (-a-bx,-b) & \quad\text{if~\(a+bx<0\).} 
        \end{array}\right .
 \]
 whenever~\(ax+b\neq 0\).
 
 By \cite{Vaes-Vainerman:Extension_of_lcqg}*{Proposition 5.5}, associated bicrossed product \(\G\) 
 and its dual \(\DuG\) are non\nb-Kac, non\nb-compact, and non\nb-discrete quantum group. Also, 
 \(\DuG\) is not unimodular. Moreover, \cite{Vaes-Vainerman:Extension_of_lcqg}*{Remark 5.6} shows 
 that~\(\DuG\) is deformation of some generalised~\(ax+b\) group.  
 \begin{proposition}
  \label{prop:ex_split_ext}
 There is an ergodic, faithful and non\nb-isometric  
 \(\Cst\)\nb-action of~\(\DuG\) on~\(\Contvin(\R)\).
 \end{proposition}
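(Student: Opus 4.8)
The plan is to run exactly the argument of Proposition~\ref{prop:ax+b}, but for the dual quantum group \(\DuG\) in place of \(\G\). The essential point is that here \(G_{1}\) is \emph{nonabelian}, so the constructions of Sections~\ref{sec:Podles_cond}--\ref{sec:properties} do not apply to \(\G\) itself; however, \(G_{2}=(\R,+)\) \emph{is} abelian. As recalled at the end of Subsection~\ref{subsec:Bicross}, the dual \(\DuG\) is the bicrossed product associated to the same matched pair with the roles of \((G_{1},\alpha)\) and \((G_{2},\beta)\) interchanged. Thus for \(\DuG\) the abelian group \(G_{2}\) plays the part of the ``first group'', and since \(\widehat{G_{2}}=\widehat{\R}\cong\R\) is a Lie group, Theorems~\ref{the:Cst_coact}, \ref{the:faithful_Cst_coact} and~\ref{the:rig_iso_faithful}, together with Proposition~\ref{prop:Ergodic}, all become available for \(\DuG\).

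Concretely, I would first invoke Theorem~\ref{the:Cst_coact} for \(\DuG\) to obtain a \(\Cst\)\nb-action \(\gamma\) of \(\DuG\) on \(\Contvin(\widehat{G_{2}})=\Contvin(\R)\), which is the action claimed in the statement. Ergodicity then follows immediately from Proposition~\ref{prop:Ergodic} applied to \(\DuG\). For faithfulness I would apply Theorem~\ref{the:faithful_Cst_coact}: under the role exchange the action whose nontriviality controls faithfulness is the one occupying the slot of \(\beta\) in the dual picture, namely the original \(\alpha\). From the explicit formula \(\alpha_{(a,b)}(x)=\frac{x}{a(a+bx)}\) one reads off at once that \(\alpha\) is nontrivial, so the action \(\gamma\) is faithful.

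Finally, non\nb-isometry follows by contradiction from Theorem~\ref{the:rig_iso_faithful}: since \(\gamma\) is already faithful, were it also isometric then \(\DuG\) would be a classical group, contradicting the fact (\cite{Vaes-Vainerman:Extension_of_lcqg}*{Proposition 5.5}) that \(\DuG\) is a genuine, non\nb-Kac quantum group. The step I expect to require the most care is the role-exchange bookkeeping at the outset: one must verify that, after dualizing, the abelianity hypothesis lands on \(G_{2}\) and the faithfulness criterion lands on the nontriviality of \(\alpha\) (rather than on \(G_{1}\) and \(\beta\)), so that every result quoted from the previous sections is being applied to the correct group and the correct action.
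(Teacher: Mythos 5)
Your proposal is correct and follows essentially the same route as the paper: observe that \(G_{2}=(\R,+)\) is abelian with \(\widehat{G_{2}}\cong\R\) a Lie group, use the role exchange defining \(\DuG\) so that the nontriviality of \(\alpha\) plays the part of the nontriviality of \(\beta\), and then cite Theorem~\ref{the:Cst_coact}, Proposition~\ref{prop:Ergodic}, Theorem~\ref{the:faithful_Cst_coact} and Theorem~\ref{the:rig_iso_faithful} together with the fact that \(\DuG\) is a genuine quantum group. Your explicit attention to the role-exchange bookkeeping is exactly the point the paper's own (terser) proof relies on.
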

 \begin{proof}
 Clearly, \(G_{2}\) is an abelian and~\(\widehat{G_2}\) is a connected 
 Lie group. Recall that 
 \(\DuG\) is obtained by exchanging \(G_1\) and~\(\alpha\) 
 with \(G_2\) and~\(\beta\). Since~\(\alpha\) is 
 non\nb-trivial, Proposition~\ref{prop:Ergodic}, Theorem~\ref{the:faithful_Cst_coact}, and 
 Theorem~\ref{the:rig_iso_faithful} give the \(\Cst\)\nb-action 
 \(\gamma\) of~\(\DuG\) on~\(\Contvin(\R)\) in 
 Theorem~\ref{the:Cst_coact} is ergodic, faithful and not isometric.
 \end{proof} 

 \begin{bibdiv}
  \begin{biblist}
\bib{Baaj-Skandalis-Vaes:Non-semi-regular}{article}{
  author={Baaj, Saad},
  author={Skandalis, Georges},
  author={Vaes, Stefaan},
  title={Non-semi-regular quantum groups coming from number theory},
  journal={Comm. Math. Phys.},
  volume={235},
  date={2003},
  number={1},
  pages={139--167},
  issn={0010-3616},
  review={\MRref {1969723}{2004g:46083}},
  doi={10.1007/s00220-002-0780-6},
}

\bib{Baaj-Vaes:Double_cros_prod}{article}{
  author={Baaj, Saad},
  author={Vaes, Stefaan},
  title={Double crossed products of locally compact quantum groups},
  journal={J. Inst. Math. Jussieu},
  volume={4},
  date={2005},
  number={1},
  pages={135--173},
  issn={1474-7480},
  review={\MRref {2115071}{2006h:46071}},
  doi={10.1017/S1474748005000034},
}

\bib{Drinfeld:Quantum_groups}{article}{
  author={Drinfel{$^\prime $}d, Vladimir Gershonovich},
  title={Quantum groups},
  booktitle={Proceedings of the {I}nternational {C}ongress of {M}athematicians, {V}ol. 1, 2 ({B}erkeley, {C}alif., 1986)},
  pages={798--820},
  publisher={Amer. Math. Soc.},
  address={Providence, RI},
  date={1987},
  review={\MRref {934283}{89f:17017}},
}

\bib{Etingof-Walton:Semisimple_Hopf_act}{article}{
  author={Etingof, Pavel},
  author={Walton, Chelsea},
  title={Semisimple Hopf actions on commutative domains},
  journal={Adv. Math.},
  volume={251},
  date={2014},
  pages={47--61},
  issn={0001-8708},
  review={\MRref{3130334}{}},
 doi={10.1016/j.aim.2013.10.008},
}

\bib{Goswami:Qnt_isometry}{article}{
author={Goswami, Debashish},
title={Quantum group of isometries in classical and noncommutative geometry},
journal={Comm. Math. Phys.},
volume={285},
date={2009},
number={1},
pages={141--160},
issn={0010-3616},
review={\MRref{2453592}{2009j:58036}},
doi={10.1007/s00220-008-0461-1},
}

\bib{Goswami-Joardar:Rigidity_CQG_act}{article}{
author={Goswami, Debashish},
author={Joardar, Soumalya},
title={Rigidity of action of compact quantum groups on compact, connected manifolds},
note={\arxiv{1309.1294}},
status={eprint},
}

\bib{Huang:Faithful_act_CQG}{article}{
  author={Huang, Huichi},
  title={Faithful compact quantum group actions on connected compact metrizable spaces},
  journal={J. Geom. Phys.},
  volume={70},
  date={2013},
  pages={232--236},
  issn={0393-0440},
  review={\MRref{3054297}{}},
  doi={10.1016/j.geomphys.2013.03.027},
}

\bib{Huang:Inv_subset_CQG_act}{article}{
  author={Huang, Huichi},
  title={Invariant subsets under compact quantum group actions},
  note={\arxiv{1210.5782v2}},
  status={eprint},
  date={2013},
}

\bib{Jimbo:Yang-baxter_eq}{article}{
  author={Jimbo, Michio},
  title={A {$q$}-difference analogue of {$U(\mathfrak {g})$} and the {Y}ang-{B}axter equation},
  journal={Lett. Math. Phys.},
  volume={10},
  date={1985},
  number={1},
  pages={63--69},
  issn={0377-9017},
  review={\MRref {797001}{86k:17008}},
  doi={10.1007/BF00704588},
}

\bib{Kustermans-Vaes:LCQG}{article}{
  author={Kustermans, Johan},
  author={Vaes, Stefaan},
  title={Locally compact quantum groups},
  journal={Ann. Sci. \'Ecole Norm. Sup. (4)},
  volume={33},
  date={2000},
  number={6},
  pages={837--934},
  issn={0012-9593},
  review={\MRref {1832993}{2002f:46108}},
  doi={10.1016/S0012-9593(00)01055-7},
}

\bib{Kustermans-Vaes:LCQGvN}{article}{
  author={Kustermans, Johan},
  author={Vaes, Stefaan},
  title={Locally compact quantum groups in the von Neumann algebraic setting},
  journal={Math. Scand.},
  volume={92},
  date={2003},
  number={1},
  pages={68--92},
  issn={0025-5521},
  review={\MRref{1951446}{2003k:46081}},
  eprint={http://www.mscand.dk/article.php?id=198},
}% g

\bib{Manin:Qnt_grp_NCG}{book}{
  author={Manin, Yuri Ivanovich},
  title={Quantum groups and noncommutative geometry},
  publisher={Universit\'e de Montr\'eal, Centre de Recherches Math\'ematiques, Montreal, QC},
  date={1988},
  pages={vi+91},
  isbn={2-921120-00-3},
  review={\MRref{1016381}{91e:17001}},
}

\bib{Masuda-Nakagami-Woronowicz:C_star_alg_qgrp}{article}{
  author={Masuda, Tetsuya},
  author={Nakagami, Yoshiomi},
  author={Woronowicz, Stanis\l aw Lech},
  title={A $C^*$\nobreakdash -algebraic framework for quantum groups},
  journal={Internat. J. Math},
  volume={14},
  date={2003},
  number={9},
  pages={903--1001},
  issn={0129-167X},
  review={\MRref {2020804}{2004j:46100}},
  doi={10.1142/S0129167X03002071},
}

\bib{Meyer-Roy-Woronowicz:Homomorphisms}{article}{
  author={Meyer, Ralf},
  author={Roy, Sutanu},
  author={Woronowicz, Stanis\l aw Lech},
  title={Homomorphisms of quantum groups},
  journal={M\"unster J. Math.},
  volume={5},
  date={2012},
  pages={1--24},
  issn={1867-5778},
  review={\MRref{3047623}{}},
  eprint={http://nbn-resolving.de/urn:nbn:de:hbz:6-88399662599},
}% g

\bib{Vaes-Vainerman:Extension_of_lcqg}{article}{
  author={Vaes, Stefaan},
  author={Vainerman, Leonid},
  title={Extensions of locally compact quantum groups and the bicrossed product construction},
  journal={Adv. Math.},
  volume={175},
  date={2003},
  number={1},
  pages={1--101},
  issn={0001-8708},
  review={\MRref {1970242}{2004i:46103}},
  doi={10.1016/S0001-8708(02)00040-3},
}

\bib{Vaes-VanDaele:Hopf_Cstalg}{article}{
     author={Vaes, Stefaan},
     author={Van Daele, Alfons},
     title={Hopf {$C^*$}-algebras},
     journal={Proc. London Math. Soc. (3)},
     volume={82},
     date={2001},
     number={2},
     pages={337--384},
     issn={0024-6115},
     review={\MRref{1806875}{2002f:46139}},
     doi={10.1112/S002461150101276X},
}
\bib{Woronowicz:CQG}{article}{
  author={Woronowicz, Stanis\l aw Lech},
  title={Compact quantum groups},
  conference={ title={Sym\'etries quantiques}, address={Les Houches}, date={1995}, },
  book={ publisher={North-Holland}, place={Amsterdam}, },
  date={1998},
  pages={845--884},
  review={\MRref {1616348}{99m:46164}},
}
\bib{Woronowicz:Multiplicative_Unitaries_to_Quantum_grp}{article}{
  author={Woronowicz, Stanis\l aw Lech},
  title={From multiplicative unitaries to quantum groups},
  journal={Internat. J. Math.},
  volume={7},
  date={1996},
  number={1},
  pages={127--149},
  issn={0129-167X},
  review={\MRref {1369908}{96k:46136}},
  doi={10.1142/S0129167X96000086},
}
 \end{biblist}    
 \end{bibdiv}  
\end{document}